\pdfoutput=1

\documentclass[titlepage]{amsart}

\usepackage{amsaddr,amsrefs,amssymb,float,tikz}
\usepackage[a4paper,margin=.925in]{geometry}

\usepackage[linktocpage=true, colorlinks=true, linkcolor=blue, citecolor=red, urlcolor=green]{hyperref}

\usetikzlibrary{arrows.meta,decorations}

\tikzset{%
    add/.style args={#1 and #2}{
        to path={%
 ($(\tikztostart)!-#1!(\tikztotarget)$)--($(\tikztotarget)!-#2!(\tikztostart)$)%
  \tikztonodes},add/.default={.2 and .2}}
}

\pgfdeclaredecoration{arrows}{draw}{
\state{draw}[width=\pgfdecoratedinputsegmentlength]{%
  \path [every arrow subpath/.try] \pgfextra{%
    \pgfpathmoveto{\pgfpointdecoratedinputsegmentfirst}%
    \pgfpathlineto{\pgfpointdecoratedinputsegmentlast}%
   };
}}

\tikzset{every arrow subpath/.style={->, draw, thick}}

\newcommand{\foresix}[8]{
\begin{tikzpicture}[baseline=(a.base),scale=#1,every node/.style={scale=#2},inner sep=0pt,outer sep=0pt]
\node (a) at (2,0) {$#8$};
\node at (3,0) {$#7$};
\node at (4,0) {$#6$};
\node at (4,-1.5) {$#4$};
\node at (5,0) {$#5$};
\node at (6,0) {$#3$};
\end{tikzpicture}
}

\newcommand{\foreseven}[9]{
\begin{tikzpicture}[baseline=(a.base),scale=#1,every node/.style={scale=#2},inner sep=0pt,outer sep=0pt]
\node (a) at (1,0) {$#9$};
\node at (2,0) {$#8$};
\node at (3,0) {$#7$};
\node at (4,0) {$#6$};
\node at (4,-1.5) {$#4$};
\node at (5,0) {$#5$};
\node at (6,0) {$#3$};
\end{tikzpicture}
}

\newcommand{\foreight}[9]{
\begin{tikzpicture}[baseline=(a.base),scale=.1,every node/.style={scale=#1},inner sep=0pt,outer sep=0pt]
\node (a) at (1,0) {$#9$};
\node at (2,0) {$#8$};
\node at (3,0) {$#7$};
\node at (4,0) {$#6$};
\node at (5,0) {$#5$};
\node at (5,-1.5) {$#3$};
\node at (6,0) {$#4$};
\node at (7,0) {$#2$};
\end{tikzpicture}
}

\tikzset{DynkinNode/.style={circle,minimum size=.7em,inner sep=0pt,font=\scriptsize}}

\newcounter{bcount}
\newcounter{dcount}
\newcounter{lastk}

\newcommand{\ADynkin}[1]{
\begin{tikzpicture}[scale=.5,baseline=(current bounding box.183)]
\foreach\kthweight[count=\k] in {#1}{
\pgfmathtruncatemacro{\prevnode}{\k-1}
\node[DynkinNode] (\k) at (\k,0) {$\scriptscriptstyle\kthweight$};
\ifnum\k>1\draw[thick] (\k) -- (\prevnode);\fi
}
\end{tikzpicture}
}

\newcommand{\oneADynkin}[1]{
\begin{tikzpicture}[scale=.5,baseline=(current bounding box.mid west)]
\foreach\kthweight[count=\k] in {#1}{
 \ifnum\k=1
  \setcounter{bcount}\kthweight
 \fi
 \ifnum\k>1
  \setcounter{lastk}\k
  \pgfmathtruncatemacro{\prevnode}{\k-1}
  \node[DynkinNode] (\k) at (\k,0) {$\scriptscriptstyle\kthweight$};
  \ifnum\k>2
   \draw[thick] (\k) -- (\prevnode);
  \fi
 \fi
}
\pgfmathparse{1+\thelastk/2}
\node[DynkinNode] (1) at (\pgfmathresult,1) {$\scriptscriptstyle\thebcount$};
\draw[thick] (2) -- (1) -- (\thelastk);
\end{tikzpicture}
}

\newcommand{\oneCDynkin}[2]{
\begin{tikzpicture}[scale=.5,baseline=(current bounding box.183)]
\foreach\kthweight[count=\k] in {#1}{
\setcounter{bcount}\k
\pgfmathtruncatemacro{\prevnode}{\k-1}
\node[DynkinNode] (\k) at (\k,0) {$\scriptscriptstyle\kthweight$};
\ifnum\k=2\draw[-Implies,double,thick] (\prevnode) -- (\k);\fi
\ifnum\k>2\draw[thick] (\k) -- (\prevnode);\fi
}
\setcounter{lastk}\thebcount
\stepcounter{bcount}
\node[DynkinNode] (\thebcount) at (\thebcount,0) {$\scriptscriptstyle#2$};
\draw[-Implies,double,thick] (\thebcount) -- (\thelastk);
\end{tikzpicture}
}

\newcommand{\oneBDynkin}[2]{
\begin{tikzpicture}[scale=.5,baseline=(current bounding box.183)]
\foreach\kthweight[count=\k] in {#1}{
\setcounter{bcount}\k
\ifnum\k=1\node[DynkinNode] (\k) at (2,.78) {$\scriptscriptstyle\kthweight$};\fi
\ifnum\k=2\node[DynkinNode] (\k) at (2,-.78) {$\scriptscriptstyle\kthweight$};\fi
\pgfmathtruncatemacro{\prevnode}{\k-1}
\ifnum\k>2\node[DynkinNode] (\k) at (\k,0) {$\scriptscriptstyle\kthweight$};\fi
\ifnum\k>3\draw[thick] (\k) -- (\prevnode);\fi
}
\setcounter{lastk}\thebcount
\stepcounter{bcount}
\node[DynkinNode] (\thebcount) at (\thebcount,0) {$\scriptscriptstyle#2$};
\draw[-Implies,double,thick] (\thelastk) -- (\thebcount);
\draw[thick] (2) -- (3) -- (1);
\end{tikzpicture}
}

\newcommand{\BDynkin}[2]{
\begin{tikzpicture}[scale=.5,baseline=(current bounding box.183)]
\foreach\kthweight[count=\k] in {#1}{
\setcounter{bcount}\k
\pgfmathtruncatemacro{\prevnode}{\k-1}
\node[DynkinNode] (\k) at (\k,0) {$\scriptscriptstyle\kthweight$};
\ifnum\k>1\draw[thick] (\k) -- (\prevnode);\fi
}
\setcounter{lastk}\thebcount
\stepcounter{bcount}
\node[DynkinNode] (\thebcount) at (\thebcount,0) {$\scriptscriptstyle#2$};
\draw[-Implies,double,thick] (\thelastk) -- (\thebcount);
\end{tikzpicture}
}

\newcommand{\CDynkin}[2]{
\begin{tikzpicture}[scale=.5,baseline=(current bounding box.183)]
\foreach\kthweight[count=\k] in {#1}{
\setcounter{bcount}\k
\pgfmathtruncatemacro{\prevnode}{\k-1}
\node[DynkinNode] (\k) at (\k,0) {$\scriptscriptstyle\kthweight$};
\ifnum\k>1\draw[thick] (\k) -- (\prevnode);\fi
}
\setcounter{lastk}\thebcount
\stepcounter{bcount}
\node[DynkinNode] (\thebcount) at (\thebcount,0) {$\scriptscriptstyle#2$};
\draw[-Implies,double,thick] (\thebcount) -- (\thelastk);
\end{tikzpicture}
}

\newcommand{\oneDDynkin}[3]{
\begin{tikzpicture}[scale=.4,baseline=(current bounding box.183)]
\foreach\kthweight[count=\k] in {#1}{
\setcounter{bcount}\k
\pgfmathtruncatemacro{\prevnode}{\k-1}
\ifnum\k=1\node[DynkinNode] (\k) at (2,.78) {$\scriptscriptstyle\kthweight$};\fi
\ifnum\k=2\node[DynkinNode] (\k) at (2,-.78) {$\scriptscriptstyle\kthweight$};\fi
\ifnum\k>2\node[DynkinNode] (\k) at (\k,0) {$\scriptscriptstyle\kthweight$};\fi
\ifnum\k>3\draw[thick] (\k) -- (\prevnode);\fi
}
\setcounter{lastk}\thebcount
\stepcounter{bcount}
\setcounter{dcount}\thebcount
\stepcounter{bcount}
\node[DynkinNode] (\thedcount) at (\thedcount,.78) {$\scriptscriptstyle#2$};
\node[DynkinNode] (\thebcount) at (\thedcount,-.78) {$\scriptscriptstyle#3$};
\draw[thick] (\thebcount) -- (\thelastk) -- (\thedcount);
\draw[thick] (1) -- (3) -- (2);
\end{tikzpicture}
}

\newcommand{\DDynkin}[3]{
\begin{tikzpicture}[scale=.4,baseline=(current bounding box.185)]
\foreach\kthweight[count=\k] in {#1}{
\setcounter{bcount}\k
\pgfmathtruncatemacro{\prevnode}{\k-1}
\node[DynkinNode] (\k) at (\k,0) {$\scriptscriptstyle\kthweight$};
\ifnum\k>1\draw[thick] (\k) -- (\prevnode);\fi
}
\setcounter{lastk}\thebcount
\stepcounter{bcount}
\setcounter{dcount}\thebcount
\stepcounter{bcount}
\node[DynkinNode] (\thedcount) at (\thedcount,.78) {$\scriptscriptstyle#2$};
\node[DynkinNode] (\thebcount) at (\thedcount,-.78) {$\scriptscriptstyle#3$};
\draw[thick] (\thebcount) -- (\thelastk) -- (\thedcount);
\end{tikzpicture}
}

\newcommand{\oneGDynkin}[1]{
\begin{tikzpicture}[scale=.8,anchor=base,baseline,inner sep=.5pt]
\foreach\kthweight[count=\k] in {#1}{
\ifnum\k=1\node[DynkinNode] (0) at (0,0) {$\scriptscriptstyle\kthweight$};\fi
\ifnum\k=2\node[DynkinNode] (1) at (1,0) {$\scriptscriptstyle\kthweight$};\fi
\ifnum\k=3\node[DynkinNode] (2) at (2,0) {$\scriptscriptstyle\kthweight$};\fi
}
\draw[thick] (0) -- (1);
\draw[-Implies,double distance=1.5pt] (1) -- (2);
\draw (1) -- (2);
\end{tikzpicture}
}

\newcommand{\GDynkin}[1]{
\begin{tikzpicture}[scale=.8,anchor=base,baseline,inner sep=.5pt]
\foreach\kthweight[count=\k] in {#1}{
\ifnum\k=1\node[DynkinNode] (1) at (-1,0) {$\scriptscriptstyle\kthweight$};\fi
\ifnum\k=2\node[DynkinNode] (2) at (-2,0) {$\scriptscriptstyle\kthweight$};\fi
}
\draw[-Implies,double distance=1.5pt] (2) -- (1);
\draw (2) -- (1);
\end{tikzpicture}
}

\newcommand{\oneFDynkin}[1]{
\begin{tikzpicture}[scale=.6,anchor=base,baseline,inner sep=.15pt]
\foreach\kthweight[count=\k] in {#1}{
\ifnum\k=1\node[DynkinNode] (0) at (0,0) {$\scriptscriptstyle\kthweight$};\fi
\ifnum\k=2\node[DynkinNode] (1) at (1,0) {$\scriptscriptstyle\kthweight$};\fi
\ifnum\k=3\node[DynkinNode] (2) at (2,0) {$\scriptscriptstyle\kthweight$};\fi
\ifnum\k=4\node[DynkinNode] (3) at (3,0) {$\scriptscriptstyle\kthweight$};\fi
\ifnum\k=5\node[DynkinNode] (4) at (4,0) {$\scriptscriptstyle\kthweight$};\fi
}
\draw[thick] (0) -- (1) -- (2);
\draw[-Implies,double,thick] (2) -- (3);
\draw[thick] (3) -- (4);
\end{tikzpicture}
}

\newcommand{\FDynkin}[1]{
\begin{tikzpicture}[scale=.6,anchor=base,baseline,inner sep=.15pt]
\foreach\kthweight[count=\k] in {#1}{
\ifnum\k=1\node[DynkinNode] (1) at (-1,0) {$\scriptscriptstyle\kthweight$};\fi
\ifnum\k=2\node[DynkinNode] (4) at (-4,0) {$\scriptscriptstyle\kthweight$};\fi
\ifnum\k=3\node[DynkinNode] (2) at (-2,0) {$\scriptscriptstyle\kthweight$};\fi
\ifnum\k=4\node[DynkinNode] (3) at (-3,0) {$\scriptscriptstyle\kthweight$};\fi
}
\draw[thick] (1) -- (2);
\draw[-Implies,double,thick] (3) -- (2);
\draw[thick] (3) -- (4);
\end{tikzpicture}
}

\newcommand{\EDynkin}[1]{
\begin{tikzpicture}[scale=.4,anchor=base,baseline]
\foreach\kthweight[count=\k] in {#1}{
\pgfmathtruncatemacro{\prevnode}{\k-1}
\ifnum\k=1\node[DynkinNode] (\k) at (0,-1) {$\scriptscriptstyle\kthweight$};\fi 
\ifnum\k>1\node at (4-\k,0) {$\scriptscriptstyle\kthweight$};
\node[DynkinNode] (\k) at (4-\k,0) {$\scriptscriptstyle\kthweight$};
\ifnum\k>2\draw[thick] (\k) -- (\prevnode);\fi
\ifnum\k=4\draw[thick] (\k) -- (1);\fi
\fi
}
\end{tikzpicture}
}

\newcommand{\onEsixDynkin}[1]{
\begin{tikzpicture}[scale=.4,anchor=base,baseline]
\foreach\kthweight[count=\k] in {#1}{
\ifnum\k<6\node[DynkinNode] (\k) at (\k,0) {$\scriptscriptstyle\kthweight$};\fi
\ifnum\k=6\node[DynkinNode] (\k) at (3,-1) {$\scriptscriptstyle\kthweight$};\fi
\ifnum\k=7\node[DynkinNode] (\k) at (3,-2) {$\scriptscriptstyle\kthweight$};\fi
}
\draw[thick] (1) -- (2) -- (3) -- (4) -- (5);
\draw[thick] (3) -- (6) -- (7);
\end{tikzpicture}
}

\newcommand{\onEsevenDynkin}[1]{
\begin{tikzpicture}[scale=.4,anchor=base,baseline]
\foreach\kthweight[count=\k] in {#1}{
\ifnum\k<8\node[DynkinNode] (\k) at (\k,0) {$\scriptscriptstyle\kthweight$};\fi
\ifnum\k=8\node[DynkinNode] (\k) at (4,-1) {$\scriptscriptstyle\kthweight$};\fi
}
\draw[thick] (1) -- (2) -- (3) -- (4) -- (5) -- (6) -- (7);
\draw[thick] (4) -- (8);
\end{tikzpicture}
}

\newcommand{\onEightDynkin}[1]{
\begin{tikzpicture}[scale=.4,anchor=base,baseline]
\foreach\kthweight[count=\k] in {#1}{
\ifnum\k<9\node[DynkinNode] (\k) at (\k,0) {$\scriptscriptstyle\kthweight$};\fi
\ifnum\k=9\node[DynkinNode] (\k) at (6,-1) {$\scriptscriptstyle\kthweight$};\fi
}
\draw[thick] (1) -- (2) -- (3) -- (4) -- (5) -- (6) -- (7) -- (8);
\draw[thick] (6) -- (9);
\end{tikzpicture}
}

\newlength\dynshift

\let\ge\geqslant
\let\le\leqslant
\let\eps\varepsilon

\newcommand\mf\mathfrak
\newcommand\mr\mathrm
\newcommand\mb\mathbf
\newcommand\bb\mathbb
\newcommand\bi\boldsymbol
\newcommand\s{^{\mathrm s}}
\newcommand\n{^{\mathrm n}}
\newcommand\ev{_{\mathrm{ev}}}
\newcommand\od{_{\mathrm{odd}}}
\newcommand\pr{_{\mathrm{pr}}}
\newcommand\transpose{^\intercal}
\newcommand{\git}{/\!\!/}
\DeclareMathOperator\ad{ad}

\newtheorem{Theorem}{Theorem}[section]
\newtheorem{Conjecture}{Conjecture}[section]
\theoremstyle{definition}
\newtheorem{Definition}{Definition}[section]
\theoremstyle{remark}
\newtheorem{Example}{Example}[section]
\newtheorem{Remark}{Remark}[section]

\numberwithin{equation}{section}

\begin{document}

\title{Normal forms of nilpotent elements in semisimple Lie algebras}

\author{Mamuka Jibladze}
\address{Razmadze Mathematical Institute,
TSU, Tbilisi 0186, Georgia
}
\email{jib@rmi.ge}
\author{Victor G. Kac}
\address{Department of Mathematics, MIT,
77 Massachusetts Avenue, Cambridge, MA 02139, USA}
\email{kac@math.mit.edu}

\begin{abstract}
We find the normal form of nilpotent elements in semisimple
Lie algebras that generalizes the Jordan normal form
in $\mf{sl}_N$, using the theory of cyclic elements.

\

\

\

\begin{center}
\ \hfill\it\Large Dedicated to the memory of Tonny A. Springer
\end{center}
\end{abstract}

\maketitle

\tableofcontents

\section{Introduction}

Let $\mf g$ be a semisimple finite-dimensional Lie algebra over an algebraically closed field $\bb F$ of characteristic 0 and let $f$ be a non-zero nilpotent element of $\mf g$. By the Morozov-Jacobson theorem, the element $f$ can be included in an $\mf{sl}_2$-triple $\mf s=\{e,h,f\}$, unique, up to conjugacy by the centralizer of $f$ in the adjoint group $G$ \cite{K}, so that $[e,f]=h$, $[h,e]=2e$, $[h,f]=-2f$. Then the eigenspace decomposition of $\mf g$ with respect to $\ad h$ is a $\bb Z$-grading of $\mf g$:
\begin{equation}\label{eq:grading}
\mf g=\bigoplus_{j=-d}^d\mf g_j,\text{ where $\mf g_{\pm d}\ne0$.}
\end{equation}
The positive integer $d$ is called the \emph{depth} of the nilpotent element $f$ in $\mf g$. Choose a Cartan subalgebra $\mf h$ in $\mf g_0$ of $\mf g$ (it contains $h$ since $h$ is central in $\mf g_0$) and a subset of positive roots of $\mf g$,
compatible with the $\bb Z$-grading \eqref{eq:grading}, and let $\alpha_1$, ..., $\alpha_r$ be simple roots.
Then the Dynkin labels $\alpha_1(h)$, ..., $\alpha_r(h)$ take the values $0$, $1$, or $2$ only, and determine $f$ up to conjugation \cite{D}. Let $\mf g\ev=\bigoplus_{j\in\bb Z}\mf g_{2j}$.

An element of $\mf g$ of the form $f+E$, where $E$ is a non-zero element of $\mf g_d$, is called a \emph{cyclic element}, associated to $f$. In \cite{K} Kostant proved that any cyclic element, associated to a principal ($=$ regular) nilpotent element $f$, is regular semisimple, and in \cite{S} Springer proved that any cyclic element, associated to a subregular nilpotent element of a simple exceptional Lie algebra, is regular semisimple as well, and, moreover, found two more distinguished nilpotent elements in $\mr E_8$ with the same property.

A systematic study of cyclic elements began in the paper \cite{EKV}. Let us remind some terminology and results from it. A non-zero nilpotent element $f$ of $\mf g$ is called of \emph{nilpotent} (resp. \emph{semisimple}) \emph{type} if all cyclic elements, associated to $f$, are nilpotent (resp. there exists a semisimple cyclic element, associated to $f$). If neither of the above cases occurs, the element $f$ is called of \emph{mixed type}. The element $f$ is said to be of \emph{regular semisimple type} if there exists a regular semisimple cyclic element associated to $f$.

An important r\^ole in the study of cyclic elements, associated to a non-zero nilpotent element $f$, is played by the centralizer $\mf z(\mf s)$ in $\mf g$ of the $\mf{sl}_2$-triple $\mf s$ and by its centralizer $Z(\mf s)$ in $G$. Since $h\in\mf s$, the group $Z(\mf s)$ preserves the grading \eqref{eq:grading}, so that we have the linear algebraic groups, obtained by restricting the action of $Z(\mf s)$ to $\mf g_j$, which we denote by $Z(\mf s)|\mf g_j$, $j\in\bb Z$. The vector space $\mf g_j$ carries a $Z(\mf s)$-invariant non-degenerate bilinear form
\[
(x,y)=\varkappa((\ad f)^jx,y),\qquad x,y\in\mf g_j,
\]
where $\varkappa$ is the Killing form. The bilinear form $(\cdot,\cdot)$ is symmetric (resp. skew-symmetric) if $j$ is even (resp. odd) \cite{P}, \cite{EKV}.

The first main result of \cite{EKV} is the following theorem.

\begin{Theorem}
\begin{itemize}
\item[(a)] A non-zero nilpotent element $f$ is of nilpotent type if and only if the depth $d$ of $f$ is odd. In this case $Z(\mf s)|\mf g_d=\mr{Sp}(\mf g_d)$.
\item[(b)] A non-zero nilpotent element $f$ is of semisimple type if and only if the set
\begin{equation}
\mb S_{\mf g}(f)=\{ E\in\mf g_d\mid\text{ $f+E$ is semisimple} \}
\end{equation}
contains a non-empty Zariski open subset. The set $\mb S_{\mf g}(f)$ is a union of closed orbits of $Z(\mf s)$ in $\mf g_d$ and is conical.
\end{itemize}
\end{Theorem}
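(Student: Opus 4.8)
The plan is to package all cyclic elements into a single graded component of a cyclic grading and invoke the Vinberg theory of graded Lie algebras. First I set up a scaling symmetry. Let $\rho\colon\bb G_m\to G$ be the one-parameter subgroup for which $\mr{Ad}(\rho(t))$ acts on $\mf g_j$ as multiplication by $t^j$. For $E\in\mf g_d$ a direct computation gives $\mr{Ad}(\rho(t))(f+E)=t^{-2}f+t^dE=t^{-2}(f+t^{d+2}E)$, hence $f+t^{d+2}E=t^2\,\mr{Ad}(\rho(t))(f+E)$. Since $\mr{Ad}(\rho(t))$ and nonzero scaling both preserve semisimplicity, $f+E$ semisimple implies $f+\lambda E$ semisimple for all $\lambda\neq0$, so $\mb S_{\mf g}(f)$ is conical. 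Evaluating a basic invariant $p_i$ of degree $d_i$ gives $p_i(f+\lambda E)=\lambda^{2d_i/(d+2)}p_i(f+E)$, so $p_i(f+E)\ne0$ forces $(d+2)\mid 2d_i$; this necessary condition is however not sufficient for non-nilpotency (e.g.\ in $\mf{so}_{10}$ there is an odd-depth orbit for which the degree-$5$ Pfaffian satisfies the constraint yet does vanish on every cyclic element), so a finer argument is needed.

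Let $\zeta$ be a primitive $(d+2)$-th root of unity and $\vartheta=\mr{Ad}(\rho(\zeta))$, a finite-order automorphism defining a $\bb Z/(d+2)$-grading of $\mf g$. Because $d$ is the depth, the residues $\overline{-2}=\overline d$ and $\overline0$ pick out exactly $\mf g_{\overline{-2}}=\mf g_{-2}\oplus\mf g_d$ and $\mf g_{\overline0}=\mf g_0$, so $(G_0,\mf g_{\overline{-2}})$ is a $\theta$-representation and the cyclic elements form the affine slice $f+\mf g_d$. I will use Vinberg's dictionary: an $x\in\mf g_{\overline{-2}}$ is nilpotent (resp.\ semisimple) in $\mf g$ iff $0\in\overline{G_0x}$ (resp.\ $G_0x$ is closed), semisimple elements being exactly those conjugate into a Cartan subspace. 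The bridge to $Z(\mf s)$ rests on two facts: $Z(\mf s)=Z_{G_0}(f)$ (an element of $G_0$ fixing $h$ and $f$ fixes $e$ as well, since $\ad f$ is injective on $\mf g_2$), and $[\mf g_0,f]=\mf g_{-2}$, so that $\mf g_d$ is a $Z(\mf s)$-stable complement to $T_f(G_0f)=\mf g_{-2}$, i.e.\ the normal slice to the orbit of $f$. Since every $g\in G_0$ preserves the grading, $g(f+E)\in f+\mf g_d$ forces $gf=f$, whence $(f+\mf g_d)\cap G_0(f+E)=f+Z(\mf s)E$.

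For part (a), the skew nondegenerate form $(x,y)=\varkappa((\ad f)^dx,y)$ for odd $d$ gives at once $Z(\mf s)|\mf g_d\subseteq\mr{Sp}(\mf g_d)$ and $\dim\mf g_d$ even. The essential point is the reverse inclusion $Z(\mf s)|\mf g_d=\mr{Sp}(\mf g_d)$; decomposing $\mf g=\bigoplus_aV(m_a)\otimes W_a$ as an $\mf{sl}_2\times\mf z(\mf s)$-module (with $V(m)$ the irreducible $\mf{sl}_2$-module of highest weight $m$) and using that $d$ is the depth identifies $\mf g_d\cong\bigoplus_{m_a=d}W_a$ as a $\mf z(\mf s)$-module, and the claim becomes that $\mf z(\mf s)$ surjects onto $\mf{sp}(\mf g_d)$. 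Granting this, $\mr{Sp}(\mf g_d)$ is transitive on $\mf g_d\setminus\{0\}$, so all cyclic elements are $Z(\mf s)$-conjugate and in particular $f+E\sim_G f+\lambda E$ for every $\lambda\neq0$; then each invariant $p_i$ is constant in $\lambda$ and equals its value at $\lambda=0$, namely $p_i(f)=0$, so $f+E$ is nilpotent and $f$ is of nilpotent type. For the converse, when $d$ is even the form is symmetric, $\mr O(\mf g_d)$ is not transitive on nonzero vectors, and the corresponding grading has a nonzero Cartan subspace meeting the slice, producing a non-nilpotent cyclic element; hence $f$ is not of nilpotent type.

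For part (b) the conical statement is already proved. If $E\in\mb S_{\mf g}(f)$ then $f+E$ is semisimple, so $G_0(f+E)$ is closed by Vinberg's criterion, and intersecting with the slice gives $f+Z(\mf s)E=(f+\mf g_d)\cap G_0(f+E)$ closed in $f+\mf g_d$; thus $Z(\mf s)E$ is closed, and since $\mb S_{\mf g}(f)$ is visibly $Z(\mf s)$-stable it is a union of closed $Z(\mf s)$-orbits. The nontrivial implication is that a nonempty $\mb S_{\mf g}(f)$ contains a Zariski-open set: a semisimple cyclic element lies, after conjugation, in a Cartan subspace of the slice representation $(Z(\mf s),\mf g_d)$, and the regular semisimple locus there is open; combined with the conical structure this forces $\mb S_{\mf g}(f)$ to contain the nonempty open set of regular semisimple cyclic elements. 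The main obstacle throughout is the surjectivity $Z(\mf s)|\mf g_d=\mr{Sp}(\mf g_d)$ together with the parity computation of the Cartan subspace of $(G_0,\mf g_{\overline{-2}})$ that underlies the odd/even dichotomy; these are the points at which the uniform argument must draw on the detailed structure theory of $\mf{sl}_2$-triples (or a case analysis).
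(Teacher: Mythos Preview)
The paper does not prove this theorem; it is quoted from \cite{EKV} as background, so there is no in-paper proof to compare against.

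That said, your framework---wrapping the $\bb Z$-grading into a $\bb Z/(d{+}2)$-grading so that cyclic elements sit in a single graded piece $\mf g_{\overline{-2}}$, and then invoking Vinberg's theory of $\theta$-groups---is precisely the approach of \cite{EKV}. Your computations of the conical property, the identification $Z(\mf s)=Z_{G_0}(f)$, and the slice intersection $(f+\mf g_d)\cap G_0(f+E)=f+Z(\mf s)E$ are correct and are the right ingredients. You are also candid about the two structural facts you do not establish: that $\mf z(\mf s)\to\mf{sp}(\mf g_d)$ is surjective when $d$ is odd, and that the Cartan subspace of $(G_0,\mf g_{\overline{-2}})$ is nonzero when $d$ is even. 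Both genuinely require further input; in \cite{EKV} they rest on detailed $\mf{sl}_2$-module bookkeeping together with case analysis.

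There is, however, one gap you do not flag. In (b), the implication ``$\mb S_{\mf g}(f)\ne\varnothing\Rightarrow\mb S_{\mf g}(f)$ contains an open set'' is not established by your argument. You appeal to a Cartan subspace of the slice representation $(Z(\mf s),\mf g_d)$, but (i) the existence of such a subspace presupposes that this representation is polar, which is itself a nontrivial (case-checked) result of \cite{EJK}, logically downstream of the theorem you are proving; and (ii) even granting polarity, you have only shown the forward direction ``$f+E$ semisimple $\Rightarrow Z(\mf s)E$ closed.'' What you need is the reverse: that for generic $E$ in (the $Z(\mf s)$-saturation of) the Cartan subspace, the element $f+E$ is semisimple in $\mf g$. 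Closedness of the $Z(\mf s)$-orbit of $E$ in $\mf g_d$ does not by itself force the $G_0$-orbit of $f+E$ in $\mf g_{\overline{-2}}$ to be closed. In a general $\theta$-group the semisimple (closed-orbit) locus is not Zariski open, so passing from a single semisimple cyclic element to an open set of them requires an argument specific to this situation, which your sketch does not supply.
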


The set $\mb S_{\mf g}(f)\subseteq\mf g_d$ for all nilpotent elements $f$ of semisimple type in all semisimple Lie algebras $\mf g$ is explicitly described in \cite{EJK}.

The positive integer $\tilde d=d-1$ (resp. $=d$) if $f$ is of nilpotent (resp. semisimple or mixed) type in $\mf g$ is called the \emph{reduced depth} of $f$ in $\mf g$. Note that $\tilde d$ is the depth of $f$ in $\mf g\ev$. The dimension of the affine algebraic variety $\mf g_{\tilde d}\git Z(\mf s)$ is called the \emph{rank} of $f$ in $\mf g$. Obviously the rank of $f$ in $\mf g$ equals the rank of $f$ in $\mf g\ev$.

The key notion of the theory of cyclic elements is that of a \emph{reducing subalgebra} for a nilpotent element $f$ in $\mf g$.
It is a semisimple subalgebra $\mf q$ of $\mf g$, normalized by $\mf s$, such that $Z(\mf s)(\mf q\cap\mf g_{\tilde d})$ contains a non-empty Zariski open subset. Note that if $\mf q$ is a reducing subalgebra for $f$ in $\mf g$, then $\mf q\cap\mf g\ev$ is as well.
In \cite{EKV}  the following conjecture was proposed.

\begin{Conjecture}\label{conj:conj}
For any non-zero nilpotent element $f$ and $\mf{sl}_2$-triple $\mf s$ containing it, there is a unique minimal reducing subalgebra,
up to conjugacy by $Z(\mf s)$.
\end{Conjecture}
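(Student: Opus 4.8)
The plan is to produce a canonical candidate for the minimal reducing subalgebra, to show that it is genuinely reducing and minimal, and then to show that every reducing subalgebra contains a $Z(\mf s)$-conjugate of it, so that all minimal ones coincide up to $Z(\mf s)$. I would open with two reductions. First, since a nilpotent element distributes among the simple ideals of $\mf g$ and a reducing subalgebra is a direct sum of reducing subalgebras of the factors, it suffices to treat $\mf g$ simple. Second, because $\mf q\cap\mf g\ev$ is reducing whenever $\mf q$ is, and both the reduced depth $\tilde d$ and the rank of $f$ are computed inside $\mf g\ev$, the construction of the candidate can be carried out there, where $\tilde d$ is the genuine depth and the relevant sections of the $Z(\mf s)$-action live in top even degree; part (a) of the quoted theorem then accounts once and for all for the odd-depth case, in which $\tilde d=d-1$.

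The conceptual heart is the analysis of the $Z(\mf s)$-module $V=\mf g_{\tilde d}$. The point is that a reducing subalgebra is exactly an $\mf s$-invariant semisimple subalgebra $\mf q$ whose degree-$\tilde d$ part $\mf q\cap V$ meets the generic $Z(\mf s)$-orbit, i.e.\ is a \emph{section} for the action. I would fix a Cartan subspace $\mf c\subseteq V$ of this graded action in the sense of Vinberg's theory of $\theta$-groups --- a maximal subspace on which the semisimple parts of the cyclic elements are simultaneously diagonalizable --- together with its little Weyl group, and take for the candidate $\mf q_0$ the smallest $\mf s$-invariant semisimple subalgebra of $\mf g$ whose intersection with $\mf g_{\tilde d}$ contains $\mf c$; this should be the subalgebra in which $f$ acquires regular semisimple type without changing its rank. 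The key general input is that all such Cartan subspaces are conjugate under $Z(\mf s)^\circ$ and that $Z(\mf s)\cdot\mf c$ is dense in $V$; granting this, $\mf q_0$ is reducing by construction.

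Minimality and uniqueness then follow formally. If $\mf q$ is any reducing subalgebra, then $\mf q\cap\mf g_{\tilde d}$ meets the generic locus of $V$, so it contains an element whose semisimple part spans a conjugate of a Cartan subspace; after moving by $Z(\mf s)$ we may assume $\mf c\subseteq\mf q\cap\mf g_{\tilde d}$, whence the minimality of $\mf q_0$ gives $\mf q_0\subseteq\mf q$. In particular any two minimal reducing subalgebras each coincide, up to $Z(\mf s)$, with $\mf q_0$, which is the assertion of the conjecture.

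The step I expect to be the genuine obstacle is the pair of claims underlying the previous two paragraphs: that the $Z(\mf s)$-module $\mf g_{\tilde d}$ admits a Cartan subspace with these good properties (a polarity/visibility statement that is \emph{not} automatic for an arbitrary such module), and that the subalgebra $\mf q_0$ it generates is in fact semisimple, $\mf s$-normalized, and minimal --- the intersection defining $\mf q_0$ need not be semisimple a priori. I do not expect these to hold by a single uniform argument. The realistic route is therefore to combine the above reduction with a case check against the classification of nilpotent orbits: in the classical types one can argue uniformly from partitions together with the explicit description of $\mb S_{\mf g}(f)$ in \cite{EJK}, while the exceptional algebras $\mr G_2,\mr F_4,\mr E_6,\mr E_7,\mr E_8$ must be treated orbit by orbit, computing $\mf g_{\tilde d}$, its $Z(\mf s)$-action, and the resulting $\mf q_0$ explicitly --- which is precisely the bookkeeping the weighted-Dynkin-diagram machinery set up in the preamble is designed to record.
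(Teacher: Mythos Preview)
Your overall strategy---build a canonical candidate from a Cartan subspace of the $Z(\mf s)$-module $\mf g_{\tilde d}$ and then use conjugacy of Cartan subspaces to get uniqueness---is exactly the route the paper takes. But two of your steps do not go through as written, and the paper handles them differently.

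First, the inference ``$\mf q$ reducing $\Rightarrow$ after conjugation $\mf c\subseteq\mf q\cap\mf g_{\tilde d}$'' is the real crux, and your justification (``meets the generic locus, so it contains an element whose semisimple part spans a conjugate of a Cartan subspace'') does not establish it: a subspace $U$ with $Z(\mf s)\cdot U$ dense need only dominate the quotient $\mf g_{\tilde d}\git Z(\mf s)$, which does not force $U$ to contain a conjugate of the whole Cartan subspace. The paper does not try to prove this for arbitrary reducing $\mf q$. Instead it invokes \cite{EJK}*{Theorem 4}, which says that for a \emph{minimal} reducing subalgebra $\mf q$ one has $\mf q_{\tilde d}$ equal to a Cartan subspace (this is where the case-by-case work you anticipate actually lives, together with the polarity of $Z(\mf s)|\mf g_{\tilde d}$). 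With that in hand the paper's candidate is simply $\mf q(f,C)$, the subalgebra generated by $f$ and $C$, and one shows $\mf q(f,C)=\mf q$ by squeezing: $\mf q(f,C)$ contains the $\mf s$-submodule generated by $C=\mf q_{\tilde d}$, which by \cite{EKV}*{Proposition 3.10} already generates a semisimple (hence reducing) subalgebra inside $\mf q$, so by minimality it is all of $\mf q$. Note this also sidesteps your worry about $\mf q_0$ being well-defined as a ``smallest'' semisimple subalgebra.

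Second, your reduction to $\mf g\ev$ does not dispose of the mixed case: $f$ already lies in $\mf g\ev$ and can still be of mixed type there. The paper treats nilpotent and mixed type by showing that $\mf q(f,C)=\mf q(f\s,C)\oplus\bb Ff\n$, where $f=f\s+f\n$ is the decomposition of Theorem~\ref{thm:decomp}. The nontrivial inclusion uses that $\mf q(f,C)$ is algebraic and that, by \cite{DSJKV}*{Theorem 3.15}, for generic $E\in C$ the nilpotent part of the Jordan decomposition of $f+E$ is exactly $f\n$; hence $f\n\in\mf q(f,C)$, and one is reduced to the semisimple-type statement for $f\s$. (A side remark: the relevant Cartan subspaces here are those of polar representations in the sense of \cite{DK}; $Z(\mf s)|\mf g_{\tilde d}$ is not in general a Vinberg $\theta$-group.)
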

In the present paper we prove this conjecture (see Theorem \ref{thm:minred}).

The next important result of \cite{EKV} is the following theorem (which is stated here in a slightly different form).
\begin{Theorem}\label{thm:decomp}
Let $f$ be a non-zero nilpotent element of $\mf g$ of reduced depth $\tilde d$.
Then there exists a reducing subalgebra $\mf q\subseteq\mf g\ev$ for $f$, $f\in\mf s\subseteq\mf n(\mf q)=\mf q\oplus\mf z(\mf q)$,
where $\mf n(\mf q)$ (resp. $\mf z(\mf q)$) stands for the normalizer (resp. centralizer) of $\mf q$ in $\mf g$, such that the decomposition
\begin{equation}\label{eq:decomp}
\text{$f=f\s+f\n$, where $f\s\in\mf q$, $f\n\in\mf z(\mf q)$,}
\end{equation}
has the following properties:
\begin{itemize}
\item[(a)] $f\s$ is an element of semisimple type in $\mf q$.
\item[(b)] If (a) holds, then the reduced depth of $f\n$ in $[\mf z(\mf q),\mf z(\mf q)]$ is smaller than $\tilde d$.
\item[(c)] If $f$ is of semisimple type, $f$ is of regular semisimple type in $\mf q$.
\end{itemize}
\end{Theorem}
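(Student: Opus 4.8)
The plan is to reduce to the even part $\mf g\ev$ and then to build $\mf q$ out of the semisimple part of a generic cyclic element. First I would pass to $\mf g\ev$: since $\tilde d$ is the depth of $f$ in $\mf g\ev$ and is even, part~(a) of the first theorem of \cite{EKV} (nilpotent type $\Leftrightarrow$ odd depth) shows that $f$ is of semisimple or mixed type in $\mf g\ev$, never of nilpotent type. Because the top piece $\mf g_{\tilde d}$ and the $Z(\mf s)$-action on it are unchanged under this passage, a subalgebra built inside $\mf g\ev$ that is reducing there will be reducing in $\mf g$ as well. So I may assume $d=\tilde d$ is even and $f$ is of semisimple or mixed type, and I look for $\mf q\subseteq\mf g\ev$.

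Next comes the construction of $\mf q$. I would choose a generic $E\in\mf g_d$ (inside $\mb S_{\mf g}(f)$ when $f$ is of semisimple type) and take the Jordan decomposition $f+E=\sigma+\nu$. The delicate step is to render $\sigma$ compatible with $\mf s$: using a one-parameter subgroup assembled from $\ad h$ and the grading automorphism $\exp(\pi\sqrt{-1}\,\ad h)$, I would contract $\sigma$ to a cyclic-homogeneous form and conjugate within the centralizer of $\nu$ so that the semisimple part becomes $f\s+E_0$ with $E_0\in\mf g_d$ while $\nu$ becomes $f\n$. Then I let $\mf q$ be the smallest $\mf s$-stable semisimple subalgebra of $\mf g\ev$ containing $f\s$ and $E_0$, so that $f\s+E_0$ is a genuine cyclic element for $f\s$ in $\mf q$, that $\mf s\subseteq\mf q\oplus\mf z(\mf q)$, and that $f=f\s+f\n$ is the decomposition \eqref{eq:decomp}. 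Reducibility of $\mf q$, i.e. density of $Z(\mf s)(\mf q\cap\mf g_d)$ in $\mf g_d$, follows from the genericity of $E$ together with the $Z(\mf s)$-conjugacy of the subalgebras arising from different generic choices (recall that $Z(\mf s)$ fixes $f$).

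With $\mf q$ in hand, property (a) is almost immediate: the element $f\s+E_0$ is semisimple by construction, so $\mb S_{\mf q}(f\s)$ is non-empty, and the criterion of \cite{EKV} (a nilpotent admits a semisimple cyclic element iff the corresponding set $\mb S$ contains a dense open subset), applied intrinsically to $f\s$ in the semisimple algebra $\mf q$, upgrades this to semisimple type of $f\s$ in $\mf q$. For (c), assume $f$ is of semisimple type. A density-transfer argument (using invariance of $\mb S_{\mf g}(f)$ under $Z(\mf s)$ and the density of $Z(\mf s)(\mf q\cap\mf g_d)$) shows that $\mb S_{\mf g}(f)\cap(\mf q\cap\mf g_d)$ is dense in $\mf q\cap\mf g_d$; since for generic $E\in\mf q\cap\mf g_d$ the commuting sum $f+E=(f\s+E)+f\n$ is its own Jordan decomposition with nilpotent part $f\n$, semisimplicity of $f+E$ forces $f\n=0$ and $f=f\s\in\mf q$. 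Taking $\mf q$ of minimal dimension among reducing subalgebras in $\mf g\ev$ then gives regular semisimple type, for if a generic semisimple cyclic element of $f$ in $\mf q$ were not regular, its reductive centralizer would carry a strictly smaller reducing subalgebra, contradicting minimality.

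The hard part will be (b), the strict depth drop. Since $h\s$ centralizes $\mf z(\mf q)$, on $\mf z(\mf q)$ one has $\ad h=\ad h\n$, so the reduced depth of $f\n$ in $[\mf z(\mf q),\mf z(\mf q)]$ equals $\max\{j:\mf z(\mf q)\cap\mf g_j\ne0\}$, and the assertion $\tilde d'<\tilde d$ is precisely the vanishing $\mf z(\mf q)\cap\mf g_d=0$. I expect this to need the full force of the construction rather than reducibility alone: the point is that the semisimple part $f\s+E_0$ already exhausts degree $d$, so that $\mf q\cap\mf g_d$ accounts up to $Z(\mf s)$ for all of $\mf g_d$ and leaves no $\mf q$-invariant (equivalently $\mf z(\mf q)$-) vectors in the top degree. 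The subtle obstacle, where I would spend the most effort, is that $\mf q$ is not $Z(\mf s)$-stable, so the vanishing cannot be read off directly from the nondegenerate form on $\mf g_d$; it must instead be extracted from the homogenization of $\sigma$, namely from the fact that $\nu=f\n$ lies in the strictly lower-graded part of $\mf z_{\mf g}(\sigma)$.
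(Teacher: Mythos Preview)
This theorem is not proved in the paper: it is quoted from \cite{EKV} (``The next important result of \cite{EKV} is the following theorem\ldots''), so there is no in-paper proof to compare against. Your proposal is therefore an attempt to reconstruct a proof that the authors deliberately outsource.

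On the substance of your sketch, the most serious gap is the ``homogenization'' step. From the Jordan decomposition $f+E=\sigma+\nu$ you want to conjugate so that $\sigma=f\s+E_0$ with $f\s\in\mf g_{-2}$ and $E_0\in\mf g_d$, while simultaneously keeping $\nu=f\n\in\mf g_{-2}$. The gadget you invoke---``a one-parameter subgroup assembled from $\ad h$ and the grading automorphism $\exp(\pi\sqrt{-1}\,\ad h)$''---does not do this: $\ad h$ rescales $\mf g_{-2}$ and $\mf g_d$ by different weights, so it destroys the shape $f+E$ rather than preserving it, and the involution $\exp(\pi\sqrt{-1}\,\ad h)$ is just $\pm1$ on graded pieces. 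There is no a priori reason the semisimple part $\sigma$ of a generic cyclic element should itself be (conjugate to) a cyclic element of some $f\s$; in \cite{EKV} this is not obtained by an abstract contraction but rather through a structural analysis of the $\mf s$-module $\mf g$ and, in the exceptional cases, by explicit computation. Your argument for (b) inherits this problem: you correctly identify that the issue is $\mf z(\mf q)\cap\mf g_d=0$, but the justification you offer (``$f\s+E_0$ already exhausts degree $d$'') presupposes the very homogenization you have not established, and you yourself flag that reducibility alone does not force the top-degree vanishing.

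A smaller but real issue: your reducibility claim for $\mf q$ (``density of $Z(\mf s)(\mf q\cap\mf g_d)$ in $\mf g_d$'') is asserted via ``$Z(\mf s)$-conjugacy of the subalgebras arising from different generic choices,'' but that conjugacy is exactly Conjecture~\ref{conj:conj}, which in the paper is proved \emph{using} Theorem~\ref{thm:decomp}, not the other way around.
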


A collection of all nilpotent elements $f$ with the same $f\s$ in \eqref{eq:decomp}, is called the \emph{bush},
containing the nilpotent element $f\s$ of semisimple type.

A stronger version of Theorem \ref{thm:decomp} (c) was proved in \cite{EJK}:
\begin{Theorem}
If $f$ is a nilpotent element of semisimple type in $\mf g$, then there exists a reducing subalgebra $\mf q$ for $f$,
such that $\mf q\cap\mf g_d$ is a Cartan subspace of the linear algebraic group $Z(\mf s)|\mf g_d$.
\end{Theorem}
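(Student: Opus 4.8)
The plan is to combine Vinberg's theory of polar (visible) representations with Theorem~\ref{thm:decomp}. \emph{Reductions.} Since $f$ is of semisimple type, Theorem~1.1(a) forces $d$ to be even, so $\tilde d=d$ and the $Z(\mf s)$-invariant form $(\cdot,\cdot)$ on $\mf g_d$ is symmetric and non-degenerate; hence $H:=Z(\mf s)|\mf g_d$ is a reductive group acting orthogonally on $V:=\mf g_d$, and by Theorem~1.1(b) the \emph{semisimple vectors} (those $E\in V$ with $f+E$ semisimple) form a dense open cone. Call $\mf c\subseteq V$ a \emph{Cartan subspace} if it is maximal among subspaces consisting of semisimple vectors. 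The two facts I will need are that $\dim\mf c=\operatorname{rank}f:=\dim(V\git H)$ and that $H\cdot\mf c$ is dense in $V$; both are outputs of the polar-representation formalism once $H:V$ is shown to be polar, and establishing polarity is exactly where the semisimple-type hypothesis is used.

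\emph{Goal.} Fixing such a $\mf c$, it suffices to produce a reducing subalgebra $\mf q$ with $\mf q\cap\mf g_d=\mf c$: indeed then $H\cdot(\mf q\cap\mf g_d)=H\cdot\mf c$ is dense, which is the reducing condition, and $\mf q\cap\mf g_d$ is a Cartan subspace by construction. Note that Theorem~\ref{thm:decomp} already supplies a reducing $\mf q_0\subseteq\mf g\ev$ with $f=f\s\in\mf q_0$ (one checks $f\n=0$ in the semisimple-type case) in which, by part~(c), $f$ is of \emph{regular} semisimple type; but $\mf q_0\cap\mf g_d$ may strictly contain a Cartan subspace whenever $Z(\mf s)$ acts on it with positive-dimensional generic orbits, so a genuine refinement is required. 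This is the sense in which the statement strengthens Theorem~\ref{thm:decomp}(c).

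\emph{Construction.} To build $\mf q$ I would pass to the realization of the grading \eqref{eq:grading} inside Vinberg's theory of the associated cyclically graded $\mf g$, in which the cyclic elements $f+E$, $E\in\mf c$, become genuinely commuting semisimple homogeneous elements and $\mf c$ becomes a Vinberg Cartan subspace. The candidate $\mf q$ is the smallest $\ad h$-homogeneous, $\mf s$-normalized reductive subalgebra supporting this commuting family of semisimple cyclic elements. I would then verify: (i) $\mf s$-stability, from homogeneity of $\mf c$ and of its image $(\ad f)^d\mf c\subseteq\mf g_{-d}$; (ii) semisimplicity, since $\mf q$ is generated by semisimple elements and carries no central torus compatible with the grading; and (iii) $\mf q\cap\mf g_d=\mf c$, where $\supseteq$ is clear and $\subseteq$ follows from maximality of $\mf c$ together with $\mf g_{2d}=0$, which annihilates the brackets that could enlarge the top piece.

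\emph{Main obstacle.} The heart of the argument is the construction and its verification (iii): producing a semisimple, $\mf s$-stable subalgebra whose top graded piece is \emph{exactly} the prescribed Cartan subspace. The difficulty is that every $E\in\mf c$ is a \emph{nilpotent} element of $\mf g$ --- only the cyclic elements $f+E$ are semisimple --- so ordinary semisimple-subalgebra generation does not apply directly. Bridging this gap, i.e.\ transporting the commuting-semisimple structure from the $\theta$-group realization back to a reducing subalgebra of $\mf g$ and controlling its commutant and root system, is the technical core, and I expect the polarity/$\theta$-group correspondence to do the essential work.
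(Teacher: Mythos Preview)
The paper does not prove this statement itself; it is quoted from \cite{EJK}, where both the polarity of $Z(\mf s)|\mf g_d$ and the existence of a reducing subalgebra whose top graded piece is a Cartan subspace are obtained by case-by-case verification across the simple Lie algebras (see the sentence introducing the theorem and, for the refinement to minimal reducing subalgebras, the appeal to \cite{EJK}*{Theorem~4} in the proof of Theorem~\ref{thm:minred}(a)). So there is no conceptual argument in the paper to compare your sketch against; the honest benchmark is ``classification plus inspection''.

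Your attempt at a uniform proof through Vinberg's $\theta$-group theory has a concrete error at the heart of the construction. You claim that the cyclic elements $f+E$, $E\in\mf c$, become ``genuinely commuting semisimple homogeneous elements'' in the periodic grading. They are homogeneous of degree $\overline{-2}$ and semisimple, but they do \emph{not} commute: for $E,E'\in\mf g_d$ one has
\[
[f+E,\,f+E']=[f,E'-E]+[E,E']=[f,E'-E],
\]
since $[E,E']\in\mf g_{2d}=0$; and $\ad f$ is injective on $\mf g_d$ by $\mf{sl}_2$-theory (the kernel of $\ad f$ lies in $\bigoplus_{j\le 0}\mf g_j$), so this bracket vanishes only when $E=E'$. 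For the same reason $\mf c\subseteq\mf g_d$ consists entirely of \emph{nilpotent} elements of $\mf g$, so $\mf c$ is never a Vinberg Cartan subspace of the cyclic grading. The identification you are counting on, between a Dadok--Kac Cartan subspace of $Z(\mf s)|\mf g_d$ and a Cartan subspace in the $\theta$-group sense, simply does not hold in the form stated; there is no commuting family of semisimple elements sitting over $\mf c$ in the way your construction requires.

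Secondary gaps: the definition of $\mf q$ as ``the smallest $\ad h$-homogeneous, $\mf s$-normalized reductive subalgebra supporting this commuting family'' has no content once the commuting family evaporates, and even granting some candidate $\mf q$, your verification of (iii) is incomplete. The vanishing $\mf g_{2d}=0$ only controls $[\mf g_d,\mf g_d]$; the subalgebra generated by $f$ and $\mf c$ can acquire further degree-$d$ elements from iterated brackets landing in $\mf g_d$ via lower graded pieces (e.g.\ $[\mf q\cap\mf g_2,\mf q\cap\mf g_{d-2}]$), and nothing in your argument prevents this. In the paper's actual development this is exactly what forces the appeal to \cite{EJK}*{Theorem~4}: one first knows, by classification, that a minimal reducing subalgebra has $\mf q_d$ a Cartan subspace, and then argues that the subalgebra generated by $f$ and $C$ coincides with it.
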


Here we should recall the notion of a \emph{polar} linear algebraic group $G|V$ and its Cartan subspace $C\subset V$, introduced in \cite{DK}.
It is a finite-dimensional faithful representation of a reductive algebraic group $G$ in a vector space $V$, which admits a subspace $C$,
called a \emph{Cartan subspace}, having the following properties, similar to that of a Cartan subalgebra in a simple Lie algebra:
\begin{itemize}
\item[(C1)] $G$-orbits of all $v\in C$ are closed,
\item[(C2)] any closed $G$-orbit in $V$ intersects $C$ by a non-empty finite subset,
\item[(C3)] all Cartan subspaces are conjugate by $G$.
\end{itemize}
We omit the formal definition of a polar representation; it is proved in \cite{DK} that it does satisfy properties (C1) -- (C3), which suffice for this paper.

It is proved in \cite{EJK} (by a case-wise verification) that all linear groups $Z(\mf s)|\mf g_{\tilde d}$ are polar.

The first main result of the paper is Theorem \ref{thm:minred} which proves Conjecture \ref{conj:conj}.
Namely, for a nilpotent element $f$ of reduced depth $\tilde d$,
let $C\subseteq\mf g_{\tilde d}$ be a Cartan subspace for the polar linear group $Z(\mf s)|\mf g_{\tilde d}$.
Let $\mf q(f,C)$ be the subalgebra of $\mf g$, generated by $f$ and $C$.
We show that $\mf q(f,C)$ is a minimal reducing subalgebra for $f$ if $f$ is of semisimple type,
and that $\mf q(f,C)=\mf q(f\s,C)\oplus\bb Cf\n$, where $f=f\s+f\n$ is the decomposition \eqref{eq:decomp}
for $f$ of nilpotent or mixed type, where $\mf q(f\s,C)$ is a minimal reducing subalgebra for $f$ ($f\n$ may be 0 if $f$ is of nilpotent type).

A nilpotent element $f$ of $\mf g$ is called \emph{irreducible} if its only reducing subalgebra is $\mf g$ itself.
All irreducible nilpotent elements in simple Lie algebras $\mf g$ are listed in \cite{EKV}*{Remark 5.4}.
They are listed in Table \ref{tab:irreds} below, reproduced from \cite{EJK}*{Table 1}. Note that $d=\tilde d$ is even for all of them.

\begin{table}[H]\renewcommand\thetable{1}\caption{Irreducible nilpotent elements in simple $\mf g$ (\cite{EJK}*{Table 1})\label{tab:irreds}}

\ \def\arraystretch{1.2}

\begin{tabular}{l|r|r|r|r}
$\mf g$&nilpotent $f$&depth&$\dim\mf g_d$&$Z(\mf s)|\mf g_d$\\
\hline\hline
$\mr A_{2k}$&$\mr A_{2k}$\hfill\ADynkin{2,2,{\,\cdots},2,2}&$4k$&1&{\bf1}\\
\hline
$\mr C_k$&$\mr C_k$\hfill\CDynkin{2,2,{\,\cdots},2}2&$4k-2$&1&{\bf1}\\
\hline
$\mr B_k$, $k\ne3$&$\mr B_k$\hfill\BDynkin{2,2,{\,\cdots},2}2&$4k-2$&1&{\bf1}\\
$\mr D_{2k+2}$&$\mr D_{2k+2}(a_k)$\hfill\DDynkin{2,0,2,{\,\cdots},2,0}22&$4k+2$&2&{\bf1} $\oplus$ {\bf1}\\
\hline
$\mr G_2$&$\mr G_2$\hfill\GDynkin{2,2}&10&1&{\bf1}\\
\hline
$\mr F_4$&$\mr F_4$\hfill\FDynkin{2,2,2,2}&22&1&{\bf1}\\
$\mr F_4$&$\mr F_4(a_2)$\hfill\FDynkin{2,0,0,2}&10&2&$\sigma_2\oplus{\bf1}$\\
\hline
$\mr E_6$&$\mr E_6(a_1)$\hfill\EDynkin{2,2,2,0,2,2}&16&1&{\bf1}\\
\hline
$\mr E_7$&$\mr E_7$\hfill\EDynkin{2,2,2,2,2,2,2}&34&1&{\bf1}\\
$\mr E_7$&$\mr E_7(a_1)$\hfill\EDynkin{2,2,2,0,2,2,2}&26&1&{\bf1}\\
$\mr E_7$&$\mr E_7(a_5)$\hfill\EDynkin{0,0,0,2,0,0,2}&10&3&$\sigma_3\oplus{\bf1}$\\
\hline
$\mr E_8$&$\mr E_8$\hfill\EDynkin{2,2,2,2,2,2,2,2}&58&1&{\bf1}\\
$\mr E_8$&$\mr E_8(a_1)$\hfill\EDynkin{2,2,2,0,2,2,2,2}&46&1&{\bf1}\\
$\mr E_8$&$\mr E_8(a_2)$\hfill\EDynkin{2,2,2,0,2,0,2,2}&38&1&{\bf1}\\
$\mr E_8$&$\mr E_8(a_4)$\hfill\EDynkin{0,2,0,2,0,2,0,2}&28&1&{\bf1}\\
$\mr E_8$&$\mr E_8(a_5)$\hfill\EDynkin{0,2,0,2,0,0,2,0}&22&2&$\sigma_2\oplus{\bf1}$\\
$\mr E_8$&$\mr E_8(a_6)$\hfill\EDynkin{0,0,0,2,0,0,2,0}&18&2&$\sigma_3$\\
$\mr E_8$&$\mr E_8(a_7)$\hfill\EDynkin{0,0,0,0,2,0,0,0}&10&4&$\sigma_5$
\end{tabular}

\end{table}

Here in all cases $k\ge1$, and $\sigma_n$ denotes the $n-1$-dimensional nontrivial irreducible representation of $S_n$.
Throughout the paper we use notation for nilpotent elements from \cite{CM};
in particular $\mr X_k$ stands for the principal nilpotent element in the simple Lie algebra of type $\mr X_k$.

Note that a nilpotent element $f$ in a semisimple Lie algebra $\mf g$ is irreducible if and only if
the projection $f_j$ of $f$ to each simple component $\mf g_j$ of $\mf g$ is irreducible in $\mf g_j$,
and the depth of $f_j$ in $\mf g_j$ is the same for all $j$.

An easy consequence of the theory of cyclic elements is Theorem \ref{thm:jordec},
which provides a \emph{normal form} of any non-zero nilpotent element $f$ of reduced depth $\tilde d$
in a semisimple Lie algebra $\mf g$ in the following sense.
There exist semisimple commuting subalgebras $\mf g[j]$, $j=1,...,s$, of $\mf g$,
and irreducible nilpotent elements $f[j]$ of reduced depth $\tilde d_j$ in $\mf g[j]$, such that:
\begin{itemize}
\item[(i)] $f=\sum_{j=1}^sf[j]$;
\item[(ii)] $\tilde d={\tilde d}_1>{\tilde d}_2>...>{\tilde d}_s$;
\item[(iii)]  the reduced depth and the rank of $f[j]$ in $\mf g[j]$ are the same as the reduced depth and the rank of $f-\sum_{k<j}f[k]$ in $\mf g(j-1)$ for all $j\ge1$, where $\mf g(j)$ is defined inductively as the derived subalgebra of the centralizer of $\mf g[j]$ in $\mf g(j-1)$, starting with $\mf g(0)=\mf g$.
\end{itemize}
Such a normal form exists for any non-zero nilpotent element $f$ and is unique up to conjugation by the centralizer of $f$ in $G$.

In the case $\mf g=\mf{sl}_N$ this normal form coincides with the Jordan normal form.
We list normal forms of all nilpotent elements of all simple Lie algebras $\mf g$
(the case of semisimple $\mf g$ obviously reduces to the simple ones).
The case of classical $\mf g$ is treated in Section \ref{sec:clas},
and normal forms of nilpotent elements in exceptional $\mf g$ are described in Tables 3--7 in Section \ref{sec:excep}.

In our next paper we use the normal form of a nilpotent element $f$
to construct a map from the set of nilpotent orbits in a simple Lie algebra $\mf g$
to the set of conjugacy classes $[w_f]$ in the Weyl group $W$ of $\mf g$.
It extends the construction of \cite{EKV}, where $[w_f]$ was constructed for $f$ of regular semisimple type
(based on the idea of \cite{K}, see also \cite{S}), to all $f$, using the normal form of $f$,
since all irreducible nilpotent elements in a simple Lie algebra are of regular semisimple type.
This map coincides with that in \cite{KL} for $f$ of regular semisimple type (in particular, for irreducible $f$, given by Table 8).
However it is different in general (cf. \cite{Spa}).

Throughout the paper the base field $\bb F$ is algebraically closed of characteristic 0.

\subsection*{Acknowledgements}
The first named author was partially supported by the grant FR-18-10849 of Shota Rustaveli National Science Foundation of Georgia.
The second named author was partially supported by the Bert and Ann Kostant fund, and by the Simons collaboration grant.

The authors would like to thank A.~Elashvili for very useful discussions. Extensive use of
the computer algebra system GAP, and the package SLA by Willem de Graaf in particular \cite{SLA}, is gratefully acknowledged.

\section{Minimal reducing subalgebras and the normal form of a nilpotent element}

\begin{Theorem}\label{thm:minred}
Let $f$ be a nilpotent element in $\mf g$ of reduced depth $\tilde d$.
Let $C\subseteq\mf g_{\tilde d}$ be a Cartan subspace of the $Z(\mf s)$-module $\mf g_{\tilde d}$.
Denote by $\mf q(f,C)$ the subalgebra of $\mf g$, generated by $f$ and $C$. Then
\begin{itemize}
\item[(a)] If $f$ is of semisimple type, then $\mf q(f,C)$ is a minimal reducing subalgebra for $f$.
\item[(b)] If $f$ is of nilpotent or mixed type, let $f=f\s+f\n$ be the decomposition \eqref{eq:decomp} with respect to a reducing subalgebra $\mf q$ for $f$.
Then $\mf q(f,C)=\mf q(f\s,C)\oplus\bb Cf\n$ (direct sum of ideals), and $\mf q(f\s,C)$ is a minimal reducing subalgebra for $f$.
\item[(c)] Any minimal reducing subalgebra for $f$ of semisimple (resp. nilpotent or mixed) type is obtained as in (a) (resp. (b)).
\item[(d)] Any two minimal reducing subalgebras for $f$ are conjugate by $Z(\mf s)$.
\end{itemize}
\end{Theorem}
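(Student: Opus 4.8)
The plan is to prove (a) first, to deduce the algebraic identity of (b) from it, and to read off (c) and (d) from the conjugacy of Cartan subspaces. I use throughout that, for the polar representation $Z(\mf s)|\mf g_{\tilde d}$, the set $Z(\mf s)\cdot C$ is dense in $\mf g_{\tilde d}$. Since $C\subseteq\mf q(f,C)\cap\mf g_{\tilde d}$, this already supplies the density condition in the definition of a reducing subalgebra, so in (a) the entire content is that $\mf q(f,C)$ is \emph{semisimple} and is \emph{normalized by} $\mf s$. The engine for all minimality statements is the following remark for $f$ of semisimple type: if $\mf q'$ is any reducing subalgebra, then, because $Z(\mf s)(\mf q'\cap\mf g_d)$ and $\mb S_{\mf g}(f)$ are both dense while $\mb S_{\mf g}(f)$ is $Z(\mf s)$-stable, there is $E\in\mf q'\cap\mf g_d$ with $f+E$ semisimple; writing $f=f\s+f\n$ for the decomposition \eqref{eq:decomp} relative to $\mf q'$, the commuting summands $f\s+E\in\mf q'$ and $f\n\in\mf z(\mf q')$ have $(f\s+E)+f\n$ semisimple, so by uniqueness of the Jordan decomposition $f\n$ equals minus the nilpotent part of $f\s+E$, which lies in $\mf q'$; thus $f\n\in\mf q'\cap\mf z(\mf q')=0$. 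Hence $f\in\mf q'$ for \emph{every} reducing subalgebra $\mf q'$.

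For the reducing property in (a), where $\tilde d=d$, I start from the reducing subalgebra $\mf q$ supplied by the theorem of \cite{EJK} quoted above, for which $\mf q\cap\mf g_d$ is a Cartan subspace; by (C3) I move $\mf q$ by an element of $Z(\mf s)$ (which fixes $f$) so that $\mf q\cap\mf g_d=C$, and then $f\in\mf q$ by the engine, so $\mf q(f,C)\subseteq\mf q$. As $f$ and the elements of $C$ are homogeneous, $\mf q(f,C)$ is graded and $\ad h$ normalizes it. For generic $E\in C$ the cyclic element $f+E$ is regular semisimple by Theorem \ref{thm:decomp}(c) and lies in $\mf q(f,C)$; from this I would show that $\mf q(f,C)$ has no nonzero solvable radical, hence is semisimple. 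Normalization by $e$ then reduces to a single membership: because $[e,C]\subseteq\mf g_{d+2}=0$, the derivation $\ad e$ maps the generators $f,C$ into $\mf q(f,C)$ as soon as $h=[e,f]\in\mf q(f,C)$, which I would again extract from the regular semisimple cyclic element together with the grading. Proving semisimplicity and the membership $h\in\mf q(f,C)$ is the step I expect to be the main obstacle.

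Minimality in (a) is then immediate. If $\mf q'$ is any reducing subalgebra, then $\mf q'\cap\mf g_d$ contains a Cartan subspace, so by (C3) some $g\in Z(\mf s)$ gives $C\subseteq g\mf q'$, and by the engine $f\in g\mf q'$; hence $\mf q(f,C)\subseteq g\mf q'$. Thus every reducing subalgebra contains a $Z(\mf s)$-conjugate of the reducing subalgebra $\mf q(f,C)$, which can only happen if $\mf q(f,C)$ is itself minimal, since a strictly smaller reducing subalgebra would then be forced to have dimension at least $\dim\mf q(f,C)$. For (d) in the semisimple case, two minimal reducing subalgebras are $\mf q(f,C_1)$ and $\mf q(f,C_2)$ by (c); choosing $g\in Z(\mf s)$ with $gC_1=C_2$ and using that $g$ fixes $f$ and that passing to the generated subalgebra is $Z(\mf s)$-equivariant gives $g\,\mf q(f,C_1)=\mf q(f,C_2)$.

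Finally, (b) follows from (a) by a short computation. Arranging $C\subseteq\mf q$ (possible since $\mf q\cap\mf g_{\tilde d}$ contains a Cartan subspace), the element $f\n\in\mf z(\mf q)$ commutes with $f\s$ and with $C$, so $\ad f$ and $\ad f\s$ agree on the subalgebra generated by $f\s$ and $C$. By (a) applied to the semisimple type element $f\s$ in $\mf q$, that subalgebra $\mf q(f\s,C)$ is semisimple, hence equals its derived algebra and so lies in $\mf q(f,C)$; then $f\n=f-f\s\in\mf q(f,C)$, while the reverse inclusion holds because $\mf q(f\s,C)\oplus\bb C f\n$ (with $f\n$ central) is already a subalgebra containing $f$ and $C$. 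This yields $\mf q(f,C)=\mf q(f\s,C)\oplus\bb C f\n$ as a direct sum of ideals. That $\mf q(f\s,C)$ is reducing \emph{for} $f$ follows on taking the $\mf{sl}_2$-triple of $f$ to be the sum of commuting triples for $f\s$ and $f\n$, so that $h$ and $e$ differ from the triple of $f\s$ by elements of $\mf z(\mf q)$ which centralize $\mf q(f\s,C)$; its minimality and the statements (c) and (d) in the nilpotent and mixed cases then reduce to the semisimple type case for $f\s$, using the uniqueness up to $Z(\mf s)$-conjugacy of the decomposition \eqref{eq:decomp}.
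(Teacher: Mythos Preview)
Your proof has a genuine gap at exactly the point you flag: in (a) you never establish that $\mf q(f,C)$ is semisimple and normalized by $\mf s$, and your sketch (``no nonzero solvable radical because it contains a regular semisimple element'', ``extract $h\in\mf q(f,C)$ from the grading'') does not go through without substantial additional work. Containing a regular semisimple element of $\mf g$ does not by itself kill the radical of a subalgebra, and nothing you have written forces $h$ into the subalgebra generated by $f$ and $C$.

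The paper bypasses this obstacle entirely by reversing the logic. It \emph{starts} from a minimal reducing subalgebra $\mf q$, invokes \cite{EJK}*{Theorem 4} to obtain $\mf s\subseteq\mf q$ and $\mf q_{\tilde d}=C$ (after conjugation), and then considers the auxiliary subalgebra $\tilde{\mf q}\subseteq\mf q$ generated by the $\mf s$-submodule $\mf m=\sum_{j\ge0}(\ad f)^jC$. The semisimplicity of $\tilde{\mf q}$ is a direct citation of \cite{EKV}*{Proposition 3.10}, so $\tilde{\mf q}$ is itself reducing; minimality of $\mf q$ forces $\tilde{\mf q}=\mf q$. Since $\mf q(f,C)$ visibly contains $\mf m$ and is contained in $\mf q$, the sandwich gives $\mf q(f,C)=\mf q$. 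No direct semisimplicity argument for $\mf q(f,C)$ is ever needed. Your minimality paragraph has a related gap: you assert that an \emph{arbitrary} reducing subalgebra $\mf q'$ satisfies $C\subseteq g\mf q'$ for some $g\in Z(\mf s)$, but the paper only knows that $\mf q'_{\tilde d}$ is a Cartan subspace when $\mf q'$ is \emph{minimal}; density of $Z(\mf s)(\mf q'\cap\mf g_{\tilde d})$ does not obviously force it to contain a Cartan subspace.

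Your argument for (b) is genuinely different and rather nice: you observe that since $f\n$ centralizes both $f\s$ and $C$, every iterated bracket of length $\ge2$ in $f\s$ and $C$ equals the same bracket with each $f\s$ replaced by $f$, so $[\mf q(f\s,C),\mf q(f\s,C)]\subseteq\mf q(f,C)$; then semisimplicity of $\mf q(f\s,C)$ gives $f\s\in\mf q(f,C)$, whence $f\n\in\mf q(f,C)$. The paper instead notes that $\mf q(f,C)$ is algebraic and quotes \cite{DSJKV}*{Theorem 3.15} to identify $f\n$ as the nilpotent part of the Jordan decomposition of $f+E$ for suitable $E\in C$. Your route avoids the integrability machinery, but it rests on applying (a) to $f\s$ inside $\mf q$, and you should check carefully that $C$ really is a Cartan subspace for the relevant representation attached to $f\s$ there; this is not automatic from the hypotheses as stated.
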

\begin{proof}
(a) Let $\mf q$ be a minimal reducing subalgebra for $f$, with induced from \eqref{eq:grading} $\bb Z$-gradation $\mf q=\bigoplus_{j\in\bb Z}\mf q_j$.
By \cite{EJK}*{Theorem 4}, $\mf s\subseteq\mf q$, $Z(\mf s)|\mf q_{\tilde d}$ is a finite linear group,
and $\mf q_{\tilde d}$ is a Cartan subspace for $Z(\mf s)|\mf g_{\tilde d}$.
Since Cartan subspaces of $Z(\mf s)|\mf g_{\tilde d}$ are conjugate to each other, we may assume that $C=\mf q_{\tilde d}$.
Let $\tilde{\mf q}$ be the subalgebra of $\mf q$, generated by the $\mf s$-submodule $\mf m=\sum_{j\ge0}(\ad f)^j\mf q_{\tilde d}$.
$\tilde{\mf q}$ is semisimple by \cite{EKV}*{Proposition 3.10}, hence it is a reducing subalgebra for $f$ in $\mf q$.
Due to minimality of $\mf q$, $\tilde{\mf q}=\mf q$, and since $\mf q(f,C)$ contains $\mf m$,
we conclude that $\mf q(f,C)$ contains $\tilde{\mf q}$, hence coincides with $\mf q$.

(b) Let $\mf q$ be the reducing subalgebra for $f$ in $\mf g$ from Theorem \ref{thm:decomp}, and let $f=f\s+f\n$ be the decomposition \eqref{eq:decomp},
hence $f\s,f\n\in\mf g_{-2}$, $f\s\in\mf q$, $[f\s,f\n]=0$. Then obviously we have
\[
\mf q(f,C)\subseteq\mf q(f\s,C)\oplus\bb Cf.
\]
In order to prove the reverse inclusion, note that $\mf q(f,C)$ is an algebraic Lie algebra. By \cite{DSJKV}*{Theorem 3.15},
the element $f+E$, where $E$ is a non-zero element of $C$ with closed $Z(\mf s)$-orbit, is integrable,
which means that the nilpotent part of the Jordan decomposition of $f+E$ is $f\n$.
Hence $f\n\in\mf q(f,C)$, proving the reverse inclusion.

(c) and (d) follow since any two Cartan subspaces of the $Z(\mf s)$-module $\mf g_{\tilde d}$ are conjugate.
\end{proof}

\begin{Remark}
Let $f$ be a nilpotent element of even depth $d$ in a simple Lie algebra $\mf g$.
It was proved in \cite{DSJKV} (by a case-wise verification) that $Z(\mf s)|\mf g_{d-1}$ is a polar linear group as well.
One can show that an analogue of Theorem \ref{thm:minred} holds in this situation as well.
Namely, provided that there exists a non-zero Cartan subspace $C\subseteq\mf g_{d-1}$, the subalgebra $\mf q(f,C)$ is either semisimple,
or is a direct sum of semisimple subalgebra and a $1$-dimensional center spanned by a nilpotent element.
In the first (resp. second) case $f+E$ is semisimple for generic $E\in\mf g_{d-1}$ (resp. never semisimple).
\end{Remark}

Now it is easy to prove the existence and uniqueness of a normal form.

\begin{Theorem}\label{thm:jordec}
Let $f$ be a non-zero nilpotent element of reduced depth $\tilde d$ in a semisimple Lie algebra $\mf g$.
Then there exist semisimple commuting subalgebras $\mf g[j]$, $j=1,...,s$,
and irreducible nilpotent elements $f[j]$ of reduced depth $\tilde d_j$ in $\mf g[j]$, such that
\begin{itemize}
\item[(i)]
\begin{equation}\label{eq:jordecomp}
f=\sum_{j=1}^sf[j];
\end{equation}
\item[(ii)] $\tilde d=\tilde d_1>\tilde d_2>...>\tilde d_s$;
\item[(iii)] the reduced depth and the rank of $f[j]$ in $\mf g[j]$ are the same as the reduced depth and the rank of $f-\sum_{k<j}f[k]$ in $\mf g(j-1)$ for all $j\ge1$, where $\mf g(j)$ is defined inductively as the derived subalgebra of the centralizer of $\mf g[j]$ in $\mf g(j-1)$, starting with $\mf g(0)=\mf g$.
\end{itemize}
Two such normal forms are conjugate to each other by the centralizer of $f$ in $G$.
\end{Theorem}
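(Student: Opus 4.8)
The plan is to prove existence and uniqueness simultaneously by induction on $\dim\mf g$, peeling off the top depth layer by means of Theorems \ref{thm:decomp} and \ref{thm:minred}. Throughout I use that $Z(\mf s)\subseteq Z_G(f)$, since $Z(\mf s)$ centralizes $\mf s\ni f$.

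\emph{Existence.} First I fix an $\mf{sl}_2$-triple $\mf s\ni f$, the grading \eqref{eq:grading}, and a Cartan subspace $C\subseteq\mf g_{\tilde d}$ for the polar group $Z(\mf s)|\mf g_{\tilde d}$. Applying Theorem \ref{thm:decomp} I obtain the decomposition \eqref{eq:decomp}, $f=f\s+f\n$, relative to a reducing subalgebra, and by Theorem \ref{thm:minred} I set $\mf g[1]:=\mf q(f\s,C)$ (which equals $\mf q(f,C)$, with $f\n=0$, when $f$ is of semisimple type) and $f[1]:=f\s$. Because $\mf g[1]$ is a minimal reducing subalgebra, $f[1]$ admits no proper reducing subalgebra inside $\mf g[1]$, i.e. it is irreducible there; moreover $\mf g[1]\cap\mf g_{\tilde d}=C$, so its reduced depth is $\tilde d_1=\tilde d$, and since $Z(\mf s)$ acts on $C$ through a finite group (as established in the proof of Theorem \ref{thm:minred}), its rank equals $\dim C$, the rank of $f$ in $\mf g$. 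This verifies (iii) for $j=1$. I then pass to $\mf g(1)=[\mf z(\mf g[1]),\mf z(\mf g[1])]$, which is semisimple with $\dim\mf g(1)<\dim\mf g$, and observe that $f\n=f-f[1]\in\mf g(1)$ has reduced depth $\tilde d_2<\tilde d_1$ there, by Theorem \ref{thm:decomp}(b). Applying the inductive hypothesis to $f\n$ in $\mf g(1)$ produces $\mf g[2],\dots,\mf g[s]$ and $f[2],\dots,f[s]$; these lie in $\mf z(\mf g[1])$, hence commute with $\mf g[1]$, and the recursion defining $\mf g(j)$ for $j\ge1$ is internal to $\mf g(1)$, so properties (i)--(iii) assemble at once. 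The base case is $f$ irreducible ($s=1$, $\mf g[1]=\mf g$).

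\emph{Uniqueness.} Given two normal forms of $f$, I first build an $\mf{sl}_2$-triple $\mf s=\{\textstyle\sum e[j],\sum h[j],\sum f[j]\}$ out of $\mf{sl}_2$-triples of the commuting pieces $f[j]$; as such triples for $f$ are $Z_G(f)$-conjugate, I may assume both forms use the same $\mf s$. The point is to recognize $\mf g[1]$ intrinsically: it is normalized by $\mf s$ (since $h=\sum h[j]$ acts on $\mf g[1]$ through $h[1]$, the remaining summands centralizing $\mf g[1]$), and the matching of reduced depth and rank in (iii), together with irreducibility of $f[1]$, forces $\mf g[1]\cap\mf g_{\tilde d}$ to be a Cartan subspace of the module $\mf g_{\tilde d}$; hence $\mf g[1]$ is a reducing subalgebra, minimal by the irreducibility of $f[1]$ in it. By Theorem \ref{thm:minred}(d) the two blocks $\mf g[1]$, $\mf g'[1]$ are conjugate by some element of $Z(\mf s)\subseteq Z_G(f)$, so after applying it I may assume $\mf g[1]=\mf g'[1]=:\mf q$. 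As $\mf q$ is semisimple, $\mf q\cap\mf z(\mf q)=0$, so the splitting of $f$ into a $\mf q$-part and a $\mf z(\mf q)$-part is unique; since $f[1],f'[1]\in\mf q$ and $f-f[1],f-f'[1]\in\mf z(\mf q)$, this yields $f[1]=f'[1]=f\s$ and $f-f[1]=f\n$. Finally I invoke the inductive uniqueness for the two normal forms of $f\n$ in $\mf g(1)$; the conjugating element centralizes $f\n$ and, lying in $\mf z(\mf q)$, also centralizes $f\s$, hence extends to an element of $Z_G(f)$, completing the induction.

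The main obstacle is the intrinsic identification in the uniqueness step: showing that the first block $\mf g[1]$ of an \emph{arbitrary} normal form is a \emph{minimal reducing} subalgebra for $f$ --- in particular that the rank--depth matching (iii) genuinely forces $\mf g[1]\cap\mf g_{\tilde d}$ to be a Cartan subspace of the whole module $\mf g_{\tilde d}$, and not merely of $\mf g[1]_{\tilde d}$ --- and then verifying that the conjugations furnished by Theorem \ref{thm:minred}(d) and by the inductive hypothesis can be realized simultaneously inside $Z_G(f)$. The existence half, by contrast, is essentially a direct reading of Theorems \ref{thm:decomp} and \ref{thm:minred}.
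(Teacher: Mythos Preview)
Your proof is correct and takes essentially the same approach as the paper: existence by iterating Theorems~\ref{thm:decomp} and~\ref{thm:minred} to peel off the top layer and recurse on $f\n$ in $\mf g(1)$, and uniqueness via conjugacy of $\mf{sl}_2$-triples together with conjugacy of minimal reducing subalgebras (Theorem~\ref{thm:minred}(d)). Your uniqueness argument is considerably more detailed than the paper's one-sentence appeal to these two conjugacy results, and you have correctly isolated the one substantive point the paper leaves implicit---that condition (iii) forces the first block of an \emph{arbitrary} normal form to be a minimal reducing subalgebra.
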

\begin{proof}
Consider a minimal reducing subalgebra $\mf q$ of $f$ in $\mf g$, provided by Theorem \ref{thm:minred},
and the decomposition \eqref{eq:decomp}: $f=f\s+f\n$. Let $f[1]=f\s$, $\mf g[1]=\mf q$, $\tilde d_1=$ reduced depth of $f[1]$ in $\mf g[1]$.

Let $\mf g(1)$ be the derived Lie algebra of the centralizer of $\mf q$ in $\mf g$. Then $f\n\in\mf g(1)$ and we apply the same procedure as above
with $f$ replaced by $f\n$ and $\mf g$ replaced by $\mf g(1)$. Note that the reduced depth ${\tilde d}_2$ of $f\n$ in $\mf g(1)$ is strictly smaller than $\tilde d_1$ due to Theorem \ref{thm:decomp}(b). After finitely many such steps we obtain a normal form of $f$ in $\mf g$.

The uniqueness, up to conjugacy by the centralizer of $f$ in $G$, follows from conjugacy of $\mf{sl}_2$-triples, containing $f$,
and of minimal reducing subalgebras for $f$.
\end{proof}

\begin{Definition}
Decomposition \eqref{eq:jordecomp} is called the \emph{normal form} of the nilpotent element $f$ in the semisimple Lie algebra $\mf g$.
\end{Definition}

\section{ Normal forms of nilpotent elements in classical simple Lie algebras}\label{sec:clas}
\subsection{\texorpdfstring{$\mf g=\mf{sl}_N$, $N\ge2$}{g = sl(N), N > 1}}
Non-zero nilpotent elements $f$, up to conjugation, are parametrized by partitions of $N$
\begin{equation}\label{eq:partition}
\bi p=(p_1^{(r_1)},p_2^{(r_2)},...,p_s^{(r_s)}),\quad N=\sum_ir_ip_i
\end{equation}
where
\begin{equation}\label{eq:parcond}
p_1>...>p_s\ge1,\quad r_i\ge1.
\end{equation}
Then the associated to the partition $\bi p$ nilpotent element $f$ is of semisimple type if and only if
\begin{equation}\label{eq:ssa}
\bi p=(p_1^{(r_1)},1^{(r_2)}),
\end{equation}
and the bush, containing the nilpotent element, associated to this partition,
consists of all nilpotent elements, associated to partitions of $N$ with the same $p_1$ and $r_1$ \cite{EKV}*{Section 4}.
Hence the decomposition \eqref{eq:decomp} of $f$ can be described as follows.
Consider the following (regular) subalgebra of $\mf{sl}_N$:
\begin{equation}\label{eq:slss}
\underbrace{\mf{sl}_{p_1}\oplus\cdots\oplus\mf{sl}_{p_1}}_{\text{$r_1$ times}}\oplus\mf{sl}_{N_1}, \text{where $N_1=N-p_1r_1$},
\end{equation}
and denote by $f[j]$, $j=1,...,r_1$, the nilpotent Jordan block of size $p_1$ in the $j$-th copy of $\mf{sl}_{p_1}$.
Then
\begin{equation}\label{eq:sldecomp}
f\s=\sum_{j=1}^{r_1}f[j],\quad f\n=f-f\s.
\end{equation}
Consequently the first $r_1$ summands of the normal form of $f$ are $f[1]$, ..., $f[r_1]$, with $f[j]\in\mf g[j]$,
where $\mf g[j]$ is the $j$-th copy of $\mf{sl}_{p_1}$ in \eqref{eq:slss} if $p_1$ is odd,
and the subalgebra $\mf{sp}_{p_1}$ of $\mf{sl}_{p_1}$, containing $f[j]$, if $p_1$ is even.

Next, we apply the same procedure to the nilpotent element $f\n$ in $\mf{sl}_{N_1}$,
which is associated to the partition $(p_2^{(r_2)},...,p_s^{(r_s)})$ of $N_1$, etc.

We thus obtain the following
\begin{Theorem}
Let $\mf g=\mf{sl}_N$, and let $f=J_{n_1}\oplus...\oplus J_{n_r}$ be the Jordan normal form of $f$,
where $J_n$ is the nilpotent Jordan block of size $n$. Then the normal form of $f$ is as follows:
we let $f[j]=J_{n_j}\in\mf g[j]$, where $\mf g[j]$ is the subalgebra $\mf{sl}_{n_j}$ (resp. $\mf{sp}_{n_j}\subset\mf{sl}_{n_j}$),
containing $f[j]$, if $n_j$ is odd (resp. even).
\end{Theorem}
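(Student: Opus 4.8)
The plan is to read off the asserted normal form as the explicit output of the inductive algorithm of Theorem \ref{thm:jordec}, fed with the combinatorial description of the decomposition \eqref{eq:decomp} in $\mf{sl}_N$ recalled in the paragraphs just above, and then to induct on $N$ (equivalently, on the number $s$ of distinct parts of the associated partition). Writing $f$ for the nilpotent attached to $\bi p=(p_1^{(r_1)},\dots,p_s^{(r_s)})$ with $p_1>\dots>p_s$, the first stage of the algorithm peels off the $r_1$ largest Jordan blocks: by the discussion preceding the statement, $f\s=\sum_{j=1}^{r_1}f[j]$ is the sum of the principal (full) nilpotents $J_{p_1}$ in the $r_1$ copies of $\mf{sl}_{p_1}$ appearing in \eqref{eq:slss}, while $f\n=f-f\s$ is the nilpotent attached to $(p_2^{(r_2)},\dots,p_s^{(r_s)})$.

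The one genuinely non-combinatorial point, and the crux of the argument, is the parity rule: the minimal reducing subalgebra containing a single block $J_n$ is $\mf{sl}_n$ when $n$ is odd and $\mf{sp}_n\subset\mf{sl}_n$ when $n$ is even. I would establish this directly. The block $J_n$ is the principal nilpotent of $\mf{sl}_n$, of depth $2(n-1)$; by Kostant's theorem \cite{K} its cyclic elements are regular semisimple, so $J_n$ is of semisimple type with even reduced depth $\tilde d=2(n-1)$. For odd $n$ the element is irreducible, this being the entry $\mr A_{n-1}=\mr A_{2k}$ of Table \ref{tab:irreds}. For even $n=2k$, I would exhibit $\mf{sp}_n$ as a reducing subalgebra: one may choose the $\mf{sl}_2$-triple $\mf s$ so that $\mf s\subset\mf{sp}_n$ and $J_n$ is the principal nilpotent of $\mf{sp}_n$ as well, whence the two $\bb Z$-gradings agree on $\mf{sp}_n$; both top pieces $\mf g_{2(n-1)}$ are one-dimensional and so coincide, giving $\mf{sp}_n\cap\mf g_{\tilde d}=\mf g_{\tilde d}$. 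Since $\mf q\cap\mf g_{\tilde d}$ is then all of $\mf g_{\tilde d}$, the reducing condition $Z(\mf s)(\mf q\cap\mf g_{\tilde d})\supseteq$ (a dense subset) is automatic, and minimality is the entry $\mr C_k$ of Table \ref{tab:irreds}.

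Next I would verify that the recursion lands cleanly in $\mf{sl}_{N_1}$ with $N_1=N-p_1r_1$. Setting $\mf g[1]=\bigoplus_{\ell=1}^{r_1}\mf q_\ell$, where each $\mf q_\ell$ is $\mf{sl}_{p_1}$ or $\mf{sp}_{p_1}$ acting irreducibly on its own $p_1$-dimensional summand $W_\ell$ of $\bb F^N$, a Schur's-lemma computation shows the centralizer of $\mf g[1]$ in $\mf{gl}_N$ is $\bb F^{r_1}\oplus\mf{gl}_{N_1}$: the diagonal intertwiner of each $W_\ell$ is a scalar, while off-diagonal intertwiners between distinct irreducible summands, and between $\bigoplus_\ell W_\ell$ and the $\mf g[1]$-trivial part $U\cong\bb F^{N_1}$, all vanish. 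Intersecting with $\mf{sl}_N$ and passing to the derived subalgebra gives $\mf g(1)=\mf{sl}_{N_1}$, in which $f\n$ is exactly the nilpotent attached to $(p_2^{(r_2)},\dots,p_s^{(r_s)})$.

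Finally I would close the induction. Applying the inductive hypothesis to $f\n$ in $\mf g(1)=\mf{sl}_{N_1}$ produces the remaining summands $f[j]=J_{n_j}$ with $\mf g[j]=\mf{sl}_{n_j}$ or $\mf{sp}_{n_j}$ according to the parity of $n_j$, and condition (ii) holds because $\tilde d_i=2(p_i-1)$ strictly decreases as $p_1>\dots>p_s$ (in the strict sense of Theorem \ref{thm:jordec} one groups the equal-sized blocks into a single irreducible element of one semisimple factor). Uniqueness up to conjugacy by the centralizer of $f$ is inherited verbatim from Theorem \ref{thm:jordec}. I expect the parity rule of the second paragraph to be the only real obstacle; everything else is the bookkeeping of the partition recursion already prepared in the text.
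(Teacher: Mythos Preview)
Your proposal is correct and follows essentially the same route as the paper: iterate the decomposition \eqref{eq:decomp} along the regular subalgebra \eqref{eq:slss}, peel off the $r_1$ largest blocks at each stage, and recurse into $\mf{sl}_{N_1}$. The paper simply asserts the parity rule and the passage to $\mf{sl}_{N_1}$ as consequences of the preceding discussion and Table~\ref{tab:irreds}; you supply the supporting arguments (the depth comparison $4k-2=2(p_1-1)$ showing $(\mf{sp}_{p_1})_{\tilde d}=(\mf{sl}_{p_1})_{\tilde d}$, and the Schur's-lemma centralizer computation), which is a welcome elaboration but not a different strategy.
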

\subsection{\texorpdfstring{$\mf g=\mf{sp}_N$, $N\ge2$, even}{g = sp(2N), N > 0}}
Non-zero nilpotent elements $f$, up to conjugacy, are parametrized by partitions of $N$ of the form \eqref{eq:partition}, \eqref{eq:parcond},
where $r_i$ is even if $p_i$ is odd. Then the associated to the partition $\bi p$ nilpotent element $f$ is of semisimple type
if and only if $\bi p$ is of the form \eqref{eq:ssa}, and the bush, containing the nilpotent element, associated to this partition,
as for $\mf{sl}_N$, consists of all nilpotent elements, associated to partitions of $N$ with the same $p_1$ and $r_1$ \cite{EKV}*{Section 4}.

In order to describe the decomposition \eqref{eq:decomp} of $f$, consider two cases.

Case (a): $p_1$ is even. Consider the following subalgebra of $\mf{sp}_N$ (cf. \eqref{eq:slss})
\begin{equation}\label{eq:spss}
\underbrace{\mf{sp}_{p_1}\oplus\cdots\oplus\mf{sp}_{p_1}}_\text{$r_1$ times}\oplus\mf{sp}_{N_1}, \text{where $N_1=N-p_1r_1$,}
\end{equation}
and denote by $f[j]$, $j=1,...,r_1$, the nilpotent Jordan block of size $p_1$ in the $j$-th copy of $\mf{sp}_{p_1}$.
Then \eqref{eq:sldecomp} holds. Consequently the first $r_1$ summands of the normal form of $f$ are $f[1]$, ..., $f[r_1]$, with $f[j]\in\mf g[j]$,
where $\mf g[j]$ is the $j$-th copy of $\mf{sp}_{p_1}$ in \eqref{eq:spss}.

Case (b): $p_1$ is odd, then $r_1$ is even: $r_1=2k_1$. Consider the following subalgebra of $\mf{sp}_N$:
\begin{equation}\label{eq:spssb}
\underbrace{\mf{sl}_{p_1}\oplus\cdots\oplus\mf{sl}_{p_1}}_\text{$k_1$ times}\oplus\mf{sp}_{N_1}, \quad N_1=N-p_1r_1,
\end{equation}
where $\mf{sl}_{p_1}$ is the subalgebra of $\mf{sp}_{p_1}$ in \eqref{eq:spss} embedded via the map $a\mapsto\operatorname{diag}(a,-a\transpose)$.
This subalgebra contains the matrix $\tilde J_{p_1}=J_{p_1}\oplus(-J_{p_1}\transpose)$.
Denote by $f[j]$, $j=1,...,k_1$, the submatrix $\tilde J_{p_1}$ in the $j$-th copy of $\mf{sl}_{p_1}$. Then \eqref{eq:sldecomp} holds.
Consequently the first $k_1$ summands of the normal form of $f$ are $f[1]$, ..., $f[k_1]$ with $f[j]\in\mf g[j]$,
where $\mf g[j]$ is the $j$-th copy of $\mf{sl}_{k_1}$ in \eqref{eq:spssb}.

Next, we apply the same procedure to the nilpotent element $f\n$ in $\mf{sp}_{N_1}$,
associated to the partition $(p_2^{(r_2)},...,p_s^{(r_s)})$ of $N_1$, etc.

We thus obtain the following
\begin{Theorem}
Let $\mf g=\mf{sp}_N$, and let $f$ be a non-zero nilpotent element of $\mf g$.
Then $f$ is a direct sum of Jordan blocks $J_{n_j}$, where $n_j$ is even,
and the blocks $\tilde J_{n_j}=J_{n_j}\oplus(-J_{n_j}\transpose)$ if $n_j$ is odd.
The normal form of $f$ is then as follows: we let $f[j]=J_{n_j}\in\mf g[j]$,
where $\mf g[j]$ is the subalgebra $\mf{sp}_{n_j}$ containing $f[j]$, if $n_j$ is even,
and $f[j]=\tilde J_{n_j}\in\mf g[j]$, where $\mf g[j]$ is the subalgebra $\mf{sl}_{n_j}$, containing $f[j]$,
embedded in $\mf{sp}_{2n_j}$ via the map $a\mapsto(a,-a\transpose)$ if $n_j$ is odd.
\end{Theorem}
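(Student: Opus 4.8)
The plan is to read off the normal form directly from the inductive construction of Theorem~\ref{thm:jordec}, feeding in at each stage the explicit shape of the decomposition~\eqref{eq:decomp} recorded in Cases~(a) and~(b) above. Realize $f$, as a matrix, as the direct sum of its Jordan blocks, so that $f$ corresponds to a partition $\bi p=(p_1^{(r_1)},\dots,p_s^{(r_s)})$ of $N$ with $p_1>\dots>p_s$ and with $r_i$ even whenever $p_i$ is odd. I would induct on the number $s$ of distinct parts, peeling off the largest part $p_1$ first.

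At the first step, the semisimple-type criterion~\eqref{eq:ssa} together with the bush description of \cite{EKV} pins down the decomposition~\eqref{eq:decomp} of $f$ furnished by Theorem~\ref{thm:decomp}: the semisimple part $f\s$ is the principal nilpotent of the regular subalgebra~\eqref{eq:spss} when $p_1$ is even, and of~\eqref{eq:spssb} when $p_1$ is odd, while $f\n=f-f\s$ is the symplectic nilpotent on the complementary $\mf{sp}_{N_1}$ attached to the partition $(p_2^{(r_2)},\dots,p_s^{(r_s)})$. Thus $f\s=\sum_j f[j]$, where $f[j]=J_{p_1}$ sits in a copy of $\mf{sp}_{p_1}$ if $p_1$ is even, and $f[j]=\tilde J_{p_1}$ sits in a copy of $\mf{sl}_{p_1}\hookrightarrow\mf{sp}_{2p_1}$, $a\mapsto\operatorname{diag}(a,-a\transpose)$, if $p_1$ is odd.

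Next I would certify each $f[j]$ against Table~\ref{tab:irreds}. For even $p_1$, $J_{p_1}$ is the principal nilpotent of $\mf{sp}_{p_1}=\mr C_{p_1/2}$, which occurs there with $\dim\mf g_d=1$ and $Z(\mf s)|\mf g_d=\mb1$, hence is irreducible of rank~$1$ and depth $4(p_1/2)-2=2p_1-2$. For odd $p_1$, $\tilde J_{p_1}$ is the principal nilpotent of $\mf{sl}_{p_1}=\mr A_{p_1-1}$ with $p_1-1$ even, again listed with $\dim\mf g_d=1$ and trivial $Z(\mf s)|\mf g_d$, hence irreducible of rank~$1$ and depth $2(p_1-1)=2p_1-2$. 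In both cases the depth is even, so the reduced depth coincides with it and equals $2p_1-2$. Observe that for odd $p_1$ the pair $\tilde J_{p_1}$ is \emph{not} irreducible in $\mf{sp}_{2p_1}$: its minimal reducing subalgebra is precisely the twisted $\mf{sl}_{p_1}$, inside which it becomes principal.

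Iterating the procedure on $f\n$ in $\mf{sp}_{N_1}$ produces the remaining parts. For a fixed $p_i$ the $r_i$ (resp.\ $k_i$) summands all carry the common reduced depth $2p_i-2$ and together constitute the semisimple block attached to $p_i$; since $p_1>\dots>p_s$, these reduced depths drop strictly from one part to the next, which is clause~(ii) of Theorem~\ref{thm:jordec}, while the rank-one and irreducibility data give clause~(iii). The accumulated pieces are exactly the $J_{n_j}$ (even $n_j$) and $\tilde J_{n_j}$ (odd $n_j$) of the statement. I expect the only genuinely delicate point to be the odd case: one must see that odd blocks are forced to occur in equal pairs and that the correct reducing subalgebra of such a pair is the twisted-diagonal $\mf{sl}_{p_1}\hookrightarrow\mf{sp}_{2p_1}$ of type $\mr A_{\mathrm{even}}$ --- rather than a symplectic subalgebra, which does not even exist for a single odd block --- so that $\tilde J_{p_1}$, and not a bare $J_{p_1}$, is the irreducible summand. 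This is exactly what~\eqref{eq:spssb} encodes, and granting it the rest is bookkeeping.
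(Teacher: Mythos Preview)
Your proposal is correct and follows essentially the same approach as the paper: the paper's argument is precisely the discussion in Cases~(a) and~(b) preceding the theorem, which identifies the decomposition~\eqref{eq:decomp} via the subalgebras~\eqref{eq:spss} and~\eqref{eq:spssb} and then iterates on $f\n\in\mf{sp}_{N_1}$. You supply more explicit detail than the paper does---verifying irreducibility against Table~\ref{tab:irreds} and computing the reduced depths $2p_i-2$ to confirm clause~(ii)---but the structure of the argument is identical.
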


\subsection{\texorpdfstring{$\mf g=\mf{so}_N$, $N\ge7$}{g = so(N), N > 6}}
We may assume that $N\ge7$ since $\mf{so}_3=\mf{sp}_2$, $\mf{so}_4=2\mf{sp}_2$, $\mf{so}_5=\mf{sp}_4$, $\mf{so}_6=\mf{sl}_4$.
Non-zero nilpotent elements $f$, up to conjugation, are parametrized by partitions of $N$ of the form \eqref{eq:partition}, \eqref{eq:parcond},
where $r_i$ us even if $p_i$ is even. Such a partition is called \emph{orthogonal}.
The associated to the partition $\bi p$ nilpotent element $f$ is of semisimple type in the following cases \cite{EKV}:
\begin{itemize}
\item[(a)] $\bi p=(3,1^{(r_2)})$;
\item[(b)] $\bi p=(p_1,1^{(r_2)})$, $p_1\ge5$ odd;
\item[(c)] $\bi p=(p_1,p_1-2,1^{(r_3)})$, $p_1\ge5$ odd;
\item[(d)] $\bi p=(p_1^{(r_1)},1^{(r_2)})$, $p_1\ge2$ even, $r_1\ge2$ even;
\item[(e)] $\bi p=(p_1^{(r_1)},1^{(r_2)})$, $p_1\ge3$ odd, $r_1\ge2$ even.
\end{itemize}
The depth $d$ of $f$ is equal to $2p_1-4$ in cases (a) -- (c), and to $2p_1-2$ in cases (d), (e).

Bushes, containing the nilpotent elements of semisimple type, consist of nilpotent elements, associated to the following partitions of $N$
(with all even parts having even multiplicities) \cite{EKV}:
\begin{itemize}
\item[(a)] the partition itself;
\item[(b)] all partitions with the same $p_1$ and $r_1=1$, satisfying $p_2<p_1-2$;
\item[(c)] all partitions with the same $p_1$, $r_1=1$, and $p_2=p_1-2$;
\item[(d)] all partitions with the same $p_1$ and the same $r_1$;
\item[(e)] all partitions with the same $p_1$ with multiplicity $r_1$ or $r_1+1$.
\end{itemize}

It is easy to deduce from \cite{EKV} or \cite{EJK} the following

\begin{table}[H]\caption{Normal forms of nilpotent elements of semisimple type in $\mf{so}_N$}\label{tab:redorth}

\

\begin{tabular}{ll|l|l}
&$f$&normal form&embedding\\
\hline
(a)&$(3,1^{(r_2)})$&$2\mr C_1$&$\mf{so}_4\subset\mf{so}_N$\\
(b)$_7$&$(7,1^{(r_2)})$&$\mr G_2$&$\mr G_2\subset\mf{so}_7$\\
(b)&$(2k+1,1^{(r_2)})$, $k\ne1,3$&$\mr B_k$&$\mf{so}_{2k+1}\subset\mf{so}_N$\\
(c)&$(2k+3,2k+1,1^{(r_3)})$&$\mr D_{2k+2}(a_k)$&$\mf{so}_{4k+4}\subset\mf{so}_N$\\
(d)&$((2k)^{(2n)},1^{(r_2)})$&$n\mr C_k$&$n\,\mf{sp}_{2k}\subset\mf{so}_N$\\
(e)&$((2k+1)^{(2n)},1^{(r_2)})$&$n\mr A_{2k}$&$n\,\mf{sl}_{2k+1}\subset\mf{so}_N$
\end{tabular}
\end{table}

The embeddings are constructed as follows. We view $\mf{so}_N$ as the Lie algebra $\mf{so}(V)$,
where $V$ is an $N$-dimensional vector space with a non-degenerate symmetric bilinear form.

(a), (b), (c). Taking a subspace $U\subseteq V$ for which the restriction of the bilinear form is non-degenerate,
we obtain the embedding $\mf{so}(U)\subseteq\mf{so}(V)$;

(b)$_7$ $G_2\subset\mf{so}_7$ by taking the irreducible 7-dimensional representation of $G_2$;

(d) If we view $\mf{sp}_m$ as the Lie algebra $\mf{sp}(U)$, where $U$ is an $m$-dimensional vector space
with a non-degenerate skewsymmetric bilinear form, then $U\otimes\bb C^2$,
where $\bb C^2$ is endowed with a non-degenerate skew-symmetric bilinear form,
defines an embedding $\mf{sp}_m\subset\mf{so}_{2m}\subseteq\mf{so}_N$.

(e) If we view $\mf{sl}_m$ as the Lie algebra $\mf{sl}(U)$, then taking $U+U^*$ with the non-degenerate symmetric bilinear form
defined by letting $U$ and $U^*$ to be isotropic, defines an embedding $\mf{sl}_m\subset\mf{so}_{2m}\subseteq\mf{so}_N$.

Given a partition $\bi p$ of the form \eqref{eq:partition}, \eqref{eq:parcond} of $N$ with even number of even parts
we break it in a union of boxes $\bi b_1$, $\bi b_2$, ..., which are partitions of integers $\le N$ as follows:

Case 1. $p_1$ is odd $\ge3$, $r_1=1$, $p_2\ne p_1-1$.

($\alpha$) $p_2<p_1-2$, then we let $\bi b_1=(p_1)$

($\beta$) $p_2=p_1-2$, then we let $\bi b_1=(p_1,p_1-2)$

Case 2. $p_1$ is even $\ge2$, $r_1\ge2$ is even. Then we let $\bi b_1=(p_1^{(r_1)})$.

Case 3. $p_1$ is odd $\ge3$, $r_1\ge2$.

($\alpha$) $r_1$ is even, then we let $\bi b_1=(p_1^{(r_1)})$

($\beta$) $r_1$ is odd, then we let $\bi b_1=(p_1^{(r_1-1)})$

Case 4. $p_1$ is odd $\ge3$, $r_1=1$, $p_2=p_1-1$. Then the associated nilpotent element $f$ is of nilpotent type,
and we let $\bi p=\bi p\ev\coprod\bi p\od$, where $\bi p\ev$ (resp. $\bi p\od$) is the subpartition of the partition $\bi p$,
consisting of its even (resp. odd) parts. Obviously, both are proper subpartitions, which correspond to even nilpotent elements
of mixed type in the subalgebras $\mf{so}_{|\bi p\ev|}$ (resp. $\mf{so}_{|\bi p\od|}$) of $\mf{so}_N$.

Next, we remove the box $\bi b_1$ from the partition $\bi p$ and continue this procedure in cases 1--3;
in Case 4 we apply this procedure to the partitions $\bi p\ev$ and $\bi p\od$.

Alternatively, we may begin by splitting the partition $\bi p$ in the disjoint union of $\bi p\ev$ and $\bi p\od$
and apply the above procedure separately to these subpartitions. Then Case 4 does not occur.

We thus obtain a collection of subalgebras $\tilde{\mf g}[j]$ of $\mf{so}_N$ and nilpotent elements $f[j]$ of semisimple type in these subalgebras,
and we replace $\tilde{\mf g}[j]$ by its subalgebra $\mf g[j]$, in which $f[j]$ is an irreducible nilpotent element, using Table \ref{tab:redorth}.

The following theorem is now clear.

\begin{Theorem}
The above procedure defines a normal form of any non-zero nilpotent element $f$ in $\mf{so}_N$, $N\ge7$.
\end{Theorem}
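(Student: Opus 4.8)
The plan is to recognize that the stated procedure is exactly the inductive algorithm in the proof of Theorem~\ref{thm:jordec}, transcribed into the combinatorics of orthogonal partitions. Since Theorem~\ref{thm:jordec} already provides existence and uniqueness of a normal form up to conjugacy by the centralizer of $f$, it suffices to check that the output of the procedure satisfies (i)--(iii); well-definedness, in particular independence of the choices (such as whether one splits into $\bi p\ev\amalg\bi p\od$ first), then follows from uniqueness. I would argue by induction on $N=\dim V$, where $\mf g=\mf{so}(V)$, showing that the first box reproduces the first step $f=f\s+f\n$ of the abstract algorithm, after which the inductive hypothesis applies to $f\n$ in $\mf g(1)=[\mf z(\mf g[1]),\mf z(\mf g[1])]$.

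It is cleanest to use the split-first organization, in which $\bi p$ is first decomposed into its even and odd subpartitions $\bi p\ev,\bi p\od$ supported on the mutually orthogonal subspaces $V\ev\perp V\od$; then only Cases 1--3 arise in each branch, since $p_2=p_1-1$ cannot hold within a single parity class. For a branch I would identify its top box with the semisimple-type constituent $f\s$: the data defining Cases 1--3 (namely $p_1$, its parity, and its multiplicity $r_1$) are precisely the data singling out the semisimple-type families (a)--(e), and the bush descriptions following that list show that all nilpotents sharing this top box share the same $f\s$. The depth formula ($d=2p_1-4$ in (a)--(c), $d=2p_1-2$ in (d),(e)) then fixes the reduced depth of the box. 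One checks that the $\mf{so}/\mf{sp}/\mf{sl}$ subalgebra $\tilde{\mf g}[j]$ supported on the blocks of the box is a reducing subalgebra and, via Table~\ref{tab:redorth}, replaces it by the minimal reducing subalgebra $\mf g[j]$ in which $f[j]$ is irreducible. Because a reducing subalgebra meets $\mf g_{\tilde d}$ in a Cartan subspace (\cite{EJK}*{Theorem 4}), both the reduced depth and the rank are preserved under $\tilde{\mf g}[j]\rightsquigarrow\mf g[j]$, which gives (iii); the various $\mf g[j]$ commute because they are supported on mutually orthogonal subspaces of $V$. Finally $f\n$ is the nilpotent element of $\mf{so}(W)$ attached to $\bi p$ with the box removed, $W$ the orthogonal complement of the box-blocks, so the recursion genuinely takes place in the derived centralizer; this last point is a direct check with the explicit embeddings of Table~\ref{tab:redorth}.

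The main obstacle is assembling the two branches into the single strictly decreasing sequence required by (ii), and this is exactly the content of Case 4. I would first show that within each parity branch the reduced depths of successive boxes strictly decrease: a Case 1 box has depth $2p-4$ and a Case 3 box depth $2p-2$, and with $p$ odd throughout the odd branch and all parts even in the even branch, no two boxes of one branch can coincide. Cross-branch coincidences do occur, however: an odd part $p=2m+1$ of multiplicity $1$ (a box of type (a)--(c), depth $2p-4=4m-2$) and an even part $p-1=2m$ (a box of type (d), depth $2(p-1)-2=4m-2$) have equal reduced depth, which is precisely the configuration $p_1$ odd, $r_1=1$, $p_2=p_1-1$ of Case 4. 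Such a pair must be \emph{bundled} into a single constituent $f[j]$ living in the direct sum $\mf g[j]$ of the two corresponding subalgebras; this bundle is irreducible in the sense of the paper precisely because its two projections are irreducible and share the common depth $4m-2$, and it is of semisimple type because each projection is. I would then verify that the bundle equals the abstract semisimple part $f\s$ of $f$: when $f$ is of Case-4 type its depth is the odd number $2p_1-3$, so $f$ is of nilpotent type, the minimal reducing subalgebra $\mf q$ lies in $\mf g\ev$, and $f\s$ has reduced depth $\tilde d=2p_1-4$, matching the bundled pair. Grouping the procedure's boxes by reduced depth in this way yields a sequence satisfying (i)--(iii), and uniqueness in Theorem~\ref{thm:jordec} identifies it with the normal form, completing the proof.
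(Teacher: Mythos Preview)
Your proposal is correct, and in fact supplies considerably more than the paper does: the paper offers no proof at all beyond ``The following theorem is now clear'' followed by \qed. Your strategy---verifying that the box procedure reproduces the inductive algorithm of Theorem~\ref{thm:jordec} by matching each box to the semisimple part $f\s$ via the bush descriptions (a)--(e) and Table~\ref{tab:redorth}, then invoking uniqueness---is exactly the reasoning the paper leaves implicit, and your analysis of the cross-parity depth coincidence (odd Case~1 box of depth $2p-4$ versus even Case~2 box of depth $2(p-1)-2$) correctly explains both Case~4 and the grouped term $(2\mr C_2+\mr D_4(a_1))$ in the paper's worked example.

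One small point worth tightening if you write this out in full: when you say ``the recursion genuinely takes place in the derived centralizer,'' note that for boxes of type (d) or (e) the derived centralizer of $\mf g[1]$ in $\mf g$ is strictly larger than $\mf{so}(W)$ (it picks up extra $\mf{sl}_2$ or $\mf{gl}_1$-type factors from the centralizer of $\mf{sp}_m$ or $\mf{sl}_m$ inside $\mf{so}_{2m}$). This does not affect the argument---$f\n$ lies in $\mf{so}(W)$, the extra factors sit in degree~$0$ for the grading defined by $f\n$, and hence contribute neither to reduced depth nor to rank---but it is not literally true that $\mf g(1)=\mf{so}(W)$, only that the recursion in $\mf{so}(W)$ yields the same output.
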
\qed

\begin{Example}
Consider the nilpotent orbit for type $\mr A$ corresponding to the partition
\begin{equation}\label{eq:exsl}
(24^{(3)},23^{(4)},21^{(5)},18,13^{(4)},10^{(5)},8^{(6)},3^{(2)},2,1^{(5)}).
\end{equation}
From the even parts $(24^{(3)},18,10^{(5)},8^{(6)},2)$ one obtains the decomposition $3\mr C_{12}+\mr C_9+5\mr C_5+6\mr C_4+\mr C_1$,
and the odd parts $(23^{(4)},21^{(5)},13^{(4)},3^{(2)},1^{(5)})$ give $4\mr A_{22}+5\mr A_{20}+4\mr A_{12}+2\mr A_2$.
The normal form of \eqref{eq:exsl} is then the sum of these two sums, arranged in decreasing order of depths:
\[
3\mr C_{12}+3\mr A_{22}+5\mr A_{20}+\mr C_9+3\mr A_{12}+\mr C_5+\mr C_4+2\mr A_2+\mr C_1.
\]

For type $\mr C$ take, for example, the partition
\begin{equation}\label{eq:exsp}
(19^{(8)},17^{(4)},12^{(6)},11^{(10)},10^{(3)},6,5^{(4)},2^{(7)},1^{(2)}).
\end{equation}
The even parts $(12^{(6)},10^{(3)},6,2^{(7)})$ of this partition produce the decomposition $6\mr C_6+3\mr C_5+\mr C_3+7\mr C_1$,
and the odd parts $(19^{(8)},17^{(4)},11^{(10)},5^{(4)},1^{(2)})$ give rise to $4\mr A_{18}+2\mr A_{16}+5\mr A_{10}+2\mr A_4$.
The normal form of \eqref{eq:exsp} is again the sum of these two, arranged in decreasing order of depths:
\[
4\mr A_{18}+2\mr A_{16}+6\mr C_6+5\mr A_{10}+3\mr C_5+\mr C_3+2\mr A_4+7\mr C_1.
\]

For an orthogonal example consider the partition
\begin{equation}\label{eq:exso}
(20^{(4)},17^{(5)},15^{(6)},13^{(4)},10^{(2)},9^{(4)},8^{(2)},7^{(3)},5^{(4)},4^{(4)},3^{(8)},2^{(8)},1^{(6)}).
\end{equation}
Here the even parts $(20^{(4)},10^{(2)},8^{(2)},4^{(4)},2^{(8)})$ give the decomposition $2\mr C_{10}+\mr C_5+\mr C_4+2\mr C_2+4\mr C_1$,
whereas the odd parts $(17^{(5)},15^{(6)},13^{(4)},9^{(4)},7^{(3)},5^{(4)},3^{(8)},1^{(6)})$ become subdivided into blocks
\[
(17^{(4)}\mid17,15\mid15^{(4)}\mid15,13\mid13^{(2)}\mid13\mid9^{(4)}\mid7^{(2)}\mid7,5\mid5^{(2)}\mid5,3\mid3^{(6)}\mid3,1^{(6)}),
\]
and this gives $2\mr A_{16}+\mr D_{16}(a_7)+2\mr A_{14}+\mr D_{14}(a_6)+\mr A_{12}+\mr B_6+2\mr A_8+\mr A_6+\mr D_6(a_2)+\mr A_4+\mr D_4(a_1)+3\mr A_2+2\mr C_1$.
Also here, the normal form of \eqref{eq:exso} is the sum of these two sums, arranged in decreasing order of depths:
\[
2\mr C_{10}+2\mr A_{16}+\mr D_{16}(a_7)+2\mr A_{14}+\mr D_{14}(a_6)+\mr A_{12}+\mr B_6+\mr C_5+2\mr A_8+\mr C_4+\mr A_6+\mr D_6(a_2)+\mr A_4+(2\mr C_2
+\mr D_4(a_1))+3\mr A_2+6\mr C_1
\]
\end{Example}

\section{Normal forms of nilpotent elements in exceptional Lie algebras}\label{sec:excep}

We will use the following embeddings of subalgebras for the normal forms.

\ \setlength\tabcolsep{2pt}

\begin{tabular}{rlll}
I&\multicolumn{3}{l}{Regular subalgebra (i.~e. normalized by a Cartan subalgebra).}\\
II&\multicolumn{3}{l}{Folding of the Dynkin diagram:}\\
&(1)$_n$&$\mr B_n\subset\mr A_{2n}$\\
&(2)$_n$&$\mr C_n\subset\mr A_{2n-1}$\\
&(3)$_n$&$\mr B_n\subset\mr D_{n+1}$\\
&(4)&$\mr G_2\subset\mr D_4$\\
&(5)&$\mr F_4\subset\mr E_6$\\
III&\multicolumn{3}{l}{Restriction:}\\
&(1)&$\mr G_2\subset\mr B_3$; $e_1,e_2+e_3$ are root vectors&attached to simple roots of $\mr G_2$,\\
&&\qquad where $e_1,e_2,e_3$ are root vectors,&attached to simple roots of $\mr B_3$.\\
&(2)$_{n_1,...,n_t}$&$\mf{so}_{n_1}\oplus...\oplus\mf{so}_{n_t}\subset\mf{so}_{n_1+\cdots+n_t+\cdots}$\\
&(3)$_n$&$\mf{sl}_n\subset\mf{sp}_{2n}$\\
&(4)$_n$&$\mf{sl}_n\subset\mf{so}_{2n}$\\
IV&\multicolumn{3}{l}{Centralizer of a simple subalgebra:}\\
&(IV)$_{\mr B_3\subset\mr D_4}$,&where $\mr B_3$ is folding of $\mr D_4$,&$\mf z(\mr B_3)\cong\mr C_1$\\
&(IV)$_{\mr B_4\subset\mr E_7}$,&where $\mr B_4$ is folding of $\mr D_5\subset\mr E_7$,&$\mf z(\mr B_4)\cong\mr C_1\oplus\mr C_1'$,\\
&&\multicolumn{2}{l}{\qquad with $\mr C_1$ (resp. $\mr C_1'$) a regular (resp. not regular) subalgebra of $\mr E_7$}\\
&(IV)$_{\mr F_4\subset\mr E_7}$,&where $\mr F_4$ is folding of $\mr E_6\subset\mr E_7$,&$\mf z(\mr F_4)\cong\mr C_1$\\
&(IV)$_{\mr B_4\subset\mr E_8}$,&where $\mr B_4$ is folding of $\mr D_5\subset\mr E_8$,&$\mf z(\mr B_4)\cong\mr B_3$,\\
&&\multicolumn{2}{l}{\qquad with $\mr B_2$ and $\mr C_1+\tilde{\mr C}_1$ regular subalgebras in $\mr B_3$}\\
&(IV)$_{\mr B_3\subset\mr E_8}$,&where $\mr B_3$ is folding of $\mr D_4\subset\mr E_8$,&$\mf z(\mr B_3)\cong\mr B_4$\\
&(IV)$_{\mr B_5\subset\mr E_8}$,&where $\mr B_5$ is folding of $\mr D_6\subset\mr E_8$,&$\mf z(\mr B_5)\cong\mr B_2=\mr C_2$\\
&(IV)$_{\mr F_4\subset\mr E_8}$,&where $\mr F_4$ is folding of $\mr E_6\subset\mr E_8$,&$\mf z(\mr F_4)\cong\mr G_2$, $\mf z(\mr G_2)\cong\mr F_4$.
\end{tabular}

\

In Tables 3--7 below we list all nilpotent elements $f$, up to conjugacy, in exceptional Lie algebras $\mf g$,
in the increasing order of their depths $d$, their representatives in $\mf g$, their normal forms $f=\sum_jf[j]$,
and describe embeddings $\mf g[j]\subseteq\mf g$.
In each block of the tables we list first the element of semisimple (resp. nilpotent) type, and after that all elements
in the same bush with the element of semisimple type.
The normal forms of these elements are obtained by adding to the normal form of the element of semisimple type the summands given for them in the tables.

It turns out that for all exceptional $\mf g$ the bush of $f$ of nilpotent type in $\mf g$ contains $f$ only,
namely, $f$ is a nilpotent element of semisimple type in $\mf g\ev$.

In all tables $f\pr$ denotes the sum of all root vectors, attached to negative simple roots.
An element $f_{k_1...k_r}$ denotes a root vector, attached to the negative root $-\sum_ik_i\alpha_i$.
The representatives of $f$ in the Tables are given in terms of these root vectors.
In most of the cases the roots occurring there are linearly independent, and we can take all coefficients equal 1.
However, in a few cases, namely, for nilpotent elements $\mr D_5(a_1)$ in $\mr E_6$, $\mr D_6(a_1)$, $\mr E_7(a_4)$, $\mr E_7(a_2)$ in $\mr E_7$,
and $\mr D_5(a_1)+\mr A_2$, $\mr D_7(a_2)$, $\mr E_7(a_4)$, $\mr E_7(a_2)$ in $\mr E_8$, the roots occurring there are linearly dependent;
in these cases we use the choice of root vectors from \cite{SLA}.

These tables are deduced from Tables 5.1 -- 5.4 of \cite{EKV}. Again, we use notation for nilpotent elements used in \cite{CM}.
However, in some cases the more adequate notation of \cite{D} is given in parentheses.

The column with the heading ``representative'' contains a linear combination of negative root vectors that belongs to the required nilpotent orbit.
Linear combinations grouped in square brackets represent the separate summands of type $f[j]$ irreducible in a subalgebra $\mf g[j]$ according to normal form. Linear combinations grouped in parentheses represent single root vectors in non-regular semisimple subalgebras.

The column with the heading ``normal form'' lists types of the summands $f[j]$ as occurring in the Table \ref{tab:irreds} of irreducible nilpotent elements.

For example, normal form $\mr D_4(a_1)+2\mr C_1$ of the nilpotent element $f$ of type $\mr A_3+\mr A_2$ in
$\mf g=\mr E_7$ (in Table \ref{tab:E7}) means that $\mf g[1]=\mr D_4$, $f[1]$ is of type $\mr D_4(a_1)$ (fourth row of Table \ref{tab:irreds} with $k=1$), while both $f[2]$ in $\mf g[2]$ and $f[3]$ in $\mf g[3]$ are of type $\mr C_1$ (second row of Table \ref{tab:irreds} with $k=1$).

For another example, the nilpotent with label $\mr A_4+\mr A_3$ in $\mr E_8$ (Table \ref{tab:E8}) with representative $[f_{\foreight{.5}00000001}+f_{\foreight{.5}00000010}+f_{\foreight{.5}00000100}+f_{\foreight{.5}00001000}]
+[(f_{\foreight{.5}12354321}+f_{\foreight{.5}00100000})+f_{\foreight{.5}10000000}]$ has normal form $\mr A_4+\mr C_2$; here, the first sum in square brackets is the principal nilpotent element represented by the sum of negative simple root vectors of the regular simple subalgebra of type $\mr A_4$, while in the second square brackets we have the principal nilpotent element in a non-regular simple subalgebra of type $\mr C_2$, where $(f_{\foreight{.5}12354321}+f_{\foreight{.5}00100000})$ is a short negative simple root vector of this subalgebra and $f_{\foreight{.5}10000000}$ is its long negative simple root vector.

\begin{table}[H]\caption{Normal forms of nilpotent elements in $\mr G_2$}

\ \def\arraystretch{1.2}

\begin{tabular}{l|c|l|c|c}
$f$&$d$&representative&normal form&embedding\\
\hline
\hline
$\mr A_1$&2&$f_{10}$&$\mr C_1$&regular\\
\hline
$\tilde{\mr A}_1$&3&$f_{01}$&$\tilde{\mr C}_1$&regular\\
\hline
$\mr G_2(a_1)$&4&$f_{10}+f_{13}$&$\mr A_2$&regular\\
\hline
$\mr G_2$&10&$f\pr$&$\mr G_2$&regular
\end{tabular}
\end{table}

\begin{table}[H]\caption{Normal forms of nilpotent elements in $\mr F_4$}


\ \def\arraystretch{1.2}

\begin{tabular}{l|c|l|l|l}
$f$&$d$&representative&normal form&embedding\\
\hline
\hline
$\mr A_1$&2&$f_{0100}$&$\mr C_1$&regular\\
\hline
$\tilde{\mr A}_1$&2&$f_{0100}+f_{0120}$&$2\mr C_1$&regular\\
\hline
$\mr A_1+\tilde{\mr A}_1$&3&$f_{0100}+f_{0120}+f_{0122}$&$3\mr C_1$&regular\\
\hline
$\mr A_2$&4&$f_{1000}+f_{0100}$&$\mr A_2$&regular\\
$\mr A_2+\tilde{\mr A}_1$&&\hfill${}+f_{0001}$&\hfill${}+\tilde{\mr C}_1$&regular\\
\hline
$\tilde{\mr A}_2$&4&$f_{0010}+f_{0001}$&$\tilde{\mr A}_2$&regular\\[\dynshift]
\hline
$\tilde{\mr A}_2+\mr A_1$&5&$f_{0010}+f_{0001}$\hfill${}+f_{1000}$&$\tilde{\mr A}_2$\hfill${}+\mr C_1$&regular\\
\hline
$\mr B_2$&6&$f_{0100}+f_{0010}$&$\mr B_2$&regular\\
$\mr C_3(a_1)$\hfill(=$\mr B_2+\mr A_1$)&&\hfill${}+f_{0111}$&\hfill${}+\mr C_1$&regular\\
\hline
$\mr F_4(a_3)$&6&$f_{1000}+f_{0100}+f_{0120}+f_{0122}$&$\mr D_4(a_1)$&regular\\
\hline
$\mr B_3$&10&$f_{1000}+(f_{0100}+f_{0010})$&$\mr G_2$&III(1)\\
\hline
$\mr C_3$&10&$f_{0001}+f_{0010}+f_{0100}$&$\mr C_3$&regular\\
\hline
$\mr F_4(a_2)$&10&$f_{0100}+f_{0120}+f_{1110}+f_{0001}$&$\mr F_4(a_2)$&regular\\
\hline
$\mr F_4(a_1)$\hfill($=\mr B_4$)&14&$f_{0100}+f_{0120}+f_{1000}+f_{0001}$&$\mr B_4$&regular\\
\hline
$\mr F_4$&22&$f\pr$&$\mr F_4$&regular
\end{tabular}
\end{table}

\begin{table}[H]\caption{Normal forms of nilpotent elements in $\mr E_6$}

\ \def\arraystretch{1.2}

\setlength\dynshift{.25em}

\begin{tabular}{l|c|l|l|l}
$f$&$d$&representative&normal form&embedding\\
\hline
\hline
$\mr A_1$&2&$f_{\foresix{.1}{.5}000001}$&$\mr C_1$&regular\\[\dynshift]
\hline
$2\mr A_1$&2&$f_{\foresix{.1}{.5}000001}+f_{\foresix{.1}{.5}100000}$&$2\mr C_1$&regular\\[\dynshift]
\hline
$3\mr A_1$&3&$f_{\foresix{.1}{.5}000001}+f_{\foresix{.1}{.5}000100}+f_{\foresix{.1}{.5}100000}$&$3\mr C_1$&regular\\[\dynshift]
\hline
$\mr A_2$&4&$f_{\foresix{.1}{.5}000100}+f_{\foresix{.1}{.5}010000}$&$\mr A_2$&regular\\
$\mr A_2+\mr A_1$&&\hfill${}+f_{\foresix{.1}{.5}100000}$&\hfill${}+\mr C_1$&regular\\
$\mr A_2+2\mr A_1$&&\hfill${}+f_{\foresix{.1}{.5}000001}+f_{\foresix{.1}{.5}100000}$&\hfill${}+2\mr C_1$&regular\\[\dynshift]
\hline
$2\mr A_2$&4&$[f_{\foresix{.1}{.5}000001}+f_{\foresix{.1}{.5}100000}]+[f_{\foresix{.1}{.5}000010}+f_{\foresix{.1}{.5}001000}]$&$2\mr A_2$&regular\\[\dynshift]
\hline
$2\mr A_2+\mr A_1$&5&$[f_{\foresix{.1}{.5}000001}+f_{\foresix{.1}{.5}100000}]+[f_{\foresix{.1}{.5}000010}+f_{\foresix{.1}{.5}001000}]$\hfill${}+f_{\foresix{.1}{.5}010000}$ &$2\mr A_2$\hfill${}+\mr C_1$&regular\\[\dynshift]
\hline
$\mr A_3$&6&$f_{\foresix{.1}{.5}000001}+(f_{\foresix{.1}{.5}000010}+f_{\foresix{.1}{.5}000100})$&$\mr C_2$&folding of $\mr A_3$\\
$\mr A_3+\mr A_1$&&\hfill${}+f_{\foresix{.1}{.5}100000}$&\hfill${}+\mr C_1$&\hfill+regular\\[\dynshift]
\hline
$\mr D_4(a_1)$&6&$f_{\foresix{.1}{.5}000010}+f_{\foresix{.1}{.5}010000}+f_{\foresix{.1}{.5}000100}+f_{\foresix{.1}{.5}001110}$&$\mr D_4(a_1)$&regular\\[\dynshift]
\hline
$\mr A_4$&8&$f_{\foresix{.1}{.5}000001}+f_{\foresix{.1}{.5}000010}+f_{\foresix{.1}{.5}000100}+f_{\foresix{.1}{.5}010000}$&$\mr A_4$&regular\\
$\mr A_4+\mr A_1$&&\hfill${}+f_{\foresix{.1}{.5}100000}$&\hfill${}+\mr C_1$&regular\\[\dynshift]
\hline
$\mr D_4$&10&$(f_{\foresix{.1}{.5}000010}+f_{\foresix{.1}{.5}010000}+f_{\foresix{.1}{.5}001000})+f_{\foresix{.1}{.5}000100}$&$\mr G_2$&folding of $\mr D_4$\\
$\mr D_5(a_1)$&&\hfill${}+(f_{\foresix{.1}{.5}010111}+f_{\foresix{.1}{.5}001111})$&\hfill${}+\mr C_1$&\hfill+(IV)$_{\mr B_3\subset\mr D_4}$\\[\dynshift]
\hline
$\mr A_5$&10 &$(f_{\foresix{.1}{.5}000001}+f_{\foresix{.1}{.5}100000})+(f_{\foresix{.1}{.5}000010}+f_{\foresix{.1}{.5}001000})+f_{\foresix{.1}{.5}000100}$
&$\mr C_3$&folding of $\mr A_5$\\[\dynshift]
\hline
$\mr E_6(a_3)$\hfill($=\mr A_5+\mr A_1$)&10&$(f_{\foresix{.1}{.5}000001}+f_{\foresix{.1}{.5}100000})+(f_{\foresix{.1}{.5}000010}+f_{\foresix{.1}{.5}001000}) +f_{\foresix{.1}{.5}000100}+f_{\foresix{.1}{.5}111111}$&$\mr F_4(a_2)$&folding of $\mr E_6$\\[\dynshift]
\hline
$\mr D_5$&14&$f_{\foresix{.1}{.5}000001}+f_{\foresix{.1}{.5}000010}+f_{\foresix{.1}{.5}000100}+(f_{\foresix{.1}{.5}001000} +f_{\foresix{.1}{.5}010000})$&$\mr B_4$&folding of $\mr D_5$\\[\dynshift]
\hline
$\mr E_6(a_1)$&16&$f_{\foresix{.1}{.5}000001}+f_{\foresix{.1}{.5}000010}+f_{\foresix{.1}{.5}001000}+f_{\foresix{.1}{.5}100000} +f_{\foresix{.1}{.5}010100}+f_{\foresix{.1}{.5}000110}$&$\mr E_6(a_1)$&regular\\[\dynshift]
\hline
$\mr E_6$&22&$(f_{\foresix{.1}{.5}000001}+f_{\foresix{.1}{.5}100000})+(f_{\foresix{.1}{.5}000010}+f_{\foresix{.1}{.5}001000}) +f_{\foresix{.1}{.5}000100} +f_{\foresix{.1}{.5}010000}$&$\mr F_4$&folding of $\mr E_6$
\end{tabular}
\end{table}

\begin{table}[H]\caption{Normal forms of nilpotent elements in $\mr E_7$\label{tab:E7}}

\ \def\arraystretch{1.3}

\setlength\dynshift{.25em}

\resizebox{.95\textwidth}{!}{%
\begin{tabular}{l|c|l|l|l}
$f$&$d$&representative&normal form&embedding\\
\hline
\hline
$\mr A_1$&2&$f_{\foreseven{.1}{.5}0000001}$&$\mr C_1$&regular\\[\dynshift]
\hline
$2\mr A_1$&2&$f_{\foreseven{.1}{.5}0000001}+f_{\foreseven{.1}{.5}0000100}$&$2\mr C_1$&regular\\[\dynshift]
\hline
$(3\mr A_1)''$&2&$f_{\foreseven{.1}{.5}0000001}+f_{\foreseven{.1}{.5}0000100}+f_{\foreseven{.1}{.5}0100000}$&$(3\mr C_1)''$&regular\\[\dynshift]
\hline
$(3\mr A_1)'$&3&$f_{\foreseven{.1}{.5}0000001}+f_{\foreseven{.1}{.5}0000100}+f_{\foreseven{.1}{.5}0010000}$&$(3\mr C_1)'$&regular\\[\dynshift]
\hline
$4\mr A_1$&3&$f_{\foreseven{.1}{.5}0000001}+f_{\foreseven{.1}{.5}0000100}+f_{\foreseven{.1}{.5}0010000}+f_{\foreseven{.1}{.5}0100000}$&$4\mr C_1$&regular\\[\dynshift]
\hline
$\mr A_2$&4&$f_{\foreseven{.1}{.5}0010000}+f_{\foreseven{.1}{.5}1000000}$&$\mr A_2$&regular\\
$\mr A_2+\mr A_1$&&\hfill${}+f_{\foreseven{.1}{.5}0000001}$&\hfill${}+\mr C_1$&regular\\
$\mr A_2+2\mr A_1$&&\hfill${}+f_{\foreseven{.1}{.5}0000100}+f_{\foreseven{.1}{.5}0000001}$&\hfill${}+2\mr C_1$&regular\\
$\mr A_2+3\mr A_1$&&\hfill${}+f_{\foreseven{.1}{.5}0100000}+f_{\foreseven{.1}{.5}0000100}+f_{\foreseven{.1}{.5}0000001}$&\hfill${}+3\mr C_1$&regular\\[\dynshift]
\hline
$2\mr A_2$&4&$[f_{\foreseven{.1}{.5}0000001}+f_{\foreseven{.1}{.5}0000010}] +[f_{\foreseven{.1}{.5}0001000}+f_{\foreseven{.1}{.5}0100000}]$&$2\mr A_2$&regular\\[\dynshift]
\hline
$2\mr A_2+\mr A_1$&5&$[f_{\foreseven{.1}{.5}0000001}+f_{\foreseven{.1}{.5}0000010}] +[f_{\foreseven{.1}{.5}0001000}+f_{\foreseven{.1}{.5}0100000}]$\hfill${}+f_{\foreseven{.1}{.5}1000000}$&$2\mr A_2$\hfill${}+\mr C_1$&regular\\[\dynshift]
\hline
$\mr A_3$&6&$(f_{\foreseven{.1}{.5}0000001}+f_{\foreseven{.1}{.5}0000100})+f_{\foreseven{.1}{.5}0000010}$&$\mr C_2$&folding of $\mr A_3$\\
$(\mr A_3+\mr A_1)'$&&\hfill${}+f_{\foreseven{.1}{.5}0010000}$&\hfill${}+\mr C_1$&\hfill+regular\\
$(\mr A_3+\mr A_1)''$&&\hfill${}+f_{\foreseven{.1}{.5}0100000}$&\hfill${}+\mr C_1$&\hfill+regular\\
$\mr A_3+2\mr A_1$&&\hfill${}+f_{\foreseven{.1}{.5}0100000}+f_{\foreseven{.1}{.5}0010000}$&\hfill${}+2\mr C_1$&\hfill+regular\\[\dynshift]
\hline
$\mr D_4(a_1)$&6&$f_{\foreseven{.1}{.5}0000100}+f_{\foreseven{.1}{.5}0011000}+f_{\foreseven{.1}{.5}0100000}+f_{\foreseven{.1}{.5}0001100}$&$\mr D_4(a_1)$&regular\\
$\mr D_4(a_1)+\mr A_1$&&\hfill${}+f_{\foreseven{.1}{.5}0000001}$&\hfill${}+\mr C_1$&regular\\
$\mr A_3+\mr A_2$\hfill$(=\mr D_4(a_1)+2\mr A_1)$&&\hfill${}+f_{\foreseven{.1}{.5}0112221}+f_{\foreseven{.1}{.5}0000001}$&\hfill${}+2\mr C_1$&regular\\
$\mr A_3+\mr A_2+\mr A_1(=\mr D_4(a_1)+3\mr A_1)$&&\hfill${}+f_{\foreseven{.1}{.5}2234321}+f_{\foreseven{.1}{.5}0112221}+f_{\foreseven{.1}{.5}0000001}$
&\hfill${}+(3\mr C_1)''$&regular\\[\dynshift]
\hline
$\mr A_4$&8&$f_{\foreseven{.1}{.5}1000000}+f_{\foreseven{.1}{.5}0010000}+f_{\foreseven{.1}{.5}0001000}+f_{\foreseven{.1}{.5}0100000}$&$\mr A_4$&regular\\
$\mr A_4+\mr A_1$&&\hfill${}+f_{\foreseven{.1}{.5}0000001}$&\hfill${}+\mr C_1$&regular\\
$\mr A_4+\mr A_2$&&\hfill${}+f_{\foreseven{.1}{.5}0000010}+f_{\foreseven{.1}{.5}0000001}$&\hfill${}+\mr A_2$&regular\\[\dynshift]
\hline
$\mr D_4$&10&$(f_{\foreseven{.1}{.5}0000100}+f_{\foreseven{.1}{.5}0010000}+f_{\foreseven{.1}{.5}0100000})+f_{\foreseven{.1}{.5}0001000}$&$\mr G_2$&folding of $\mr D_4$\\
$\mr D_4+\mr A_1$&&\hfill${}+f_{\foreseven{.1}{.5}0000001}$&\hfill${}+\mr C_1$&\hfill+regular\\
$\mr D_5(a_1)$\hfill$(=\mr D_4+2\mr A_1)$&&\hfill${}+f_{\foreseven{.1}{.5}0112221}+f_{\foreseven{.1}{.5}0000001}$&\hfill${}+2\mr C_1$&\hfill+regular\\
$\mr D_5(a_1)+\mr A_1$\hfill$(=\mr D_4+3\mr A_1)$&&\hfill${}+f_{\foreseven{.1}{.5}2234321}+f_{\foreseven{.1}{.5}0112221}+f_{\foreseven{.1}{.5}0000001}$&\hfill${}+(3\mr C_1)''$&\hfill+regular\\[\dynshift]
\hline
$\mr A_5'$&10&$(f_{\foreseven{.1}{.5}0000001}+f_{\foreseven{.1}{.5}0010000}) +(f_{\foreseven{.1}{.5}0000010}+f_{\foreseven{.1}{.5}0001000})+f_{\foreseven{.1}{.5}0000100}$
&$\mr C_3$&folding of $\mr A_5$\\[\dynshift]
\hline
$\mr A_5''$&10 &$(f_{\foreseven{.1}{.5}0000001}+f_{\foreseven{.1}{.5}0100000}) +(f_{\foreseven{.1}{.5}0000010}+f_{\foreseven{.1}{.5}0001000})+f_{\foreseven{.1}{.5}0000100}$
&$\mr C_3$&folding of $\mr A_5$\\
$\mr A_5+\mr A_1$\hfill($=(\mr A_5+\mr A_1)''$)&&\hfill${}+f_{\foreseven{.1}{.5}1000000}$&\hfill${}+\mr C_1$&\hfill+regular\\[\dynshift]
\hline
$\mr D_6(a_2)$&10&$f_{\foreseven{.1}{.5}0000001}+f_{\foreseven{.1}{.5}0000011}+f_{\foreseven{.1}{.5}0000110}+f_{\foreseven{.1}{.5}0001100} +f_{\foreseven{.1}{.5}0011000}+f_{\foreseven{.1}{.5}0100000}$&$\mr D_6(a_2)$&regular\\[\dynshift]
\hline
$\mr E_6(a_3)$\hfill($=(\mr A_5+\mr A_1)'$)&10&$(f_{\foreseven{.1}{.5}0000010}+f_{\foreseven{.1}{.5}1000000})+(f_{\foreseven{.1}{.5}0000100}+f_{\foreseven{.1}{.5}0010000}) +f_{\foreseven{.1}{.5}0001000}+f_{\foreseven{.1}{.5}1111110}$&$\mr F_4(a_2)$&folding of $\mr E_6$\\[\dynshift]
\hline
$\mr E_7(a_5)$\hfill($=\mr D_6(a_2)+\mr A_1$)&10&$f_{\foreseven{.1}{.5}0000001}+f_{\foreseven{.1}{.5}0000010}+f_{\foreseven{.1}{.5}0000100}+f_{\foreseven{.1}{.5}0001000} +f_{\foreseven{.1}{.5}0100000}+f_{\foreseven{.1}{.5}1000000}+f_{\foreseven{.1}{.5}1234321}$&$\mr E_7(a_5)$&regular\\[\dynshift]
\hline
$\mr A_6$&12&$f_{\foreseven{.1}{.5}0000001}+f_{\foreseven{.1}{.5}0000010}+f_{\foreseven{.1}{.5}0000100}+f_{\foreseven{.1}{.5}0001000} +f_{\foreseven{.1}{.5}0010000}+f_{\foreseven{.1}{.5}1000000}$&$\mr A_6$&regular\\[\dynshift]
\hline
$\mr D_5$&14&$f_{\foreseven{.1}{.5}1000000}+f_{\foreseven{.1}{.5}0010000}+f_{\foreseven{.1}{.5}0001000}+(f_{\foreseven{.1}{.5}0000100} +f_{\foreseven{.1}{.5}0100000})$&$\mr B_4$&folding of $\mr D_5$\\
$\mr D_5+\mr A_1$&&\hfill${}+f_{\foreseven{.1}{.5}0000001}$&\hfill${}+\mr C_1$&\hfill+regular\\
$\mr D_6(a_1)$&&\hfill${}+(f_{\foreseven{.1}{.5}1223221}-f_{\foreseven{.1}{.5}1123321})$&\hfill${}+\mr C_1'$&\hfill+IV$_{\mr B_4\subset\mr E_7}$\\
$\mr E_7(a_4)$\hfill($=\mr D_6(a_1)+\mr A_1$)&&\hfill${}+(f_{\foreseven{.1}{.5}1223221}-f_{\foreseven{.1}{.5}1123321})+f_{\foreseven{.1}{.5}0000001}$&\hfill${}+\mr C_1'+\mr C_1$&\hfill+IV$_{\mr B_4\subset\mr E_7}${}+regular\\[\dynshift]
\hline
$\mr E_6(a_1)$&16&$f_{\foreseven{.1}{.5}0000010}+f_{\foreseven{.1}{.5}0000100}+f_{\foreseven{.1}{.5}0010000}+f_{\foreseven{.1}{.5}1000000} +f_{\foreseven{.1}{.5}0101000}+f_{\foreseven{.1}{.5}0001100}$&$\mr E_6(a_1)$&regular\\[\dynshift]
\hline
$\mr D_6$&18&$f_{\foreseven{.1}{.5}0000001}+f_{\foreseven{.1}{.5}0000010}+f_{\foreseven{.1}{.5}0000100}+f_{\foreseven{.1}{.5}0001000} +(f_{\foreseven{.1}{.5}0010000}+f_{\foreseven{.1}{.5}0100000})$&$\mr B_5$&folding of $\mr D_6$\\
$\mr E_7(a_3)$\hfill$(=\mr D_6+\mr A_1)$&&\hfill${}+f_{\foreseven{.1}{.5}2234321}$&\hfill${}+\mr C_1$&\hfill+regular\\[\dynshift]
\hline
$\mr E_6$&22&$(f_{\foreseven{.1}{.5}0000010}+f_{\foreseven{.1}{.5}1000000})+(f_{\foreseven{.1}{.5}0000100}+f_{\foreseven{.1}{.5}0010000}) +f_{\foreseven{.1}{.5}0001000} +f_{\foreseven{.1}{.5}0100000}$&$\mr F_4$&folding of $\mr E_6$\\
$\mr E_7(a_2)$&&\hfill${}+(f_{\foreseven{.1}{.5}0112221}+f_{\foreseven{.1}{.5}1112211}+f_{\foreseven{.1}{.5}1122111})$&\hfill${}+\mr C_1$&\hfill+IV$_{\mr F_4\subset\mr E_7}$\\[\dynshift]
\hline
$\mr E_7(a_1)$&26&$f_{\foreseven{.1}{.5}0000001}+f_{\foreseven{.1}{.5}0000010}+f_{\foreseven{.1}{.5}0000100}+f_{\foreseven{.1}{.5}0010000} +f_{\foreseven{.1}{.5}1000000}+f_{\foreseven{.1}{.5}0011000}+f_{\foreseven{.1}{.5}0101000}$&$\mr E_7(a_1)$&regular\\[\dynshift]
\hline
$\mr E_7$&34&$f\pr$&$\mr E_7$&regular
\end{tabular}
}
\end{table}

\begin{table}[H]\caption{Normal forms of nilpotent elements in $\mr E_8$\label{tab:E8}}

\ \def\arraystretch{1.5}

\setlength\dynshift{.25em}

\resizebox{.95\textwidth}{!}{
\begin{tabular}{l|c|l|l|l}
$f$&$d$&representative&normal form&embedding\\
\hline
\hline
$\mr A_1$&2&$f_{\foreight{.5}00000001}$&$\mr C_1$&regular\\[\dynshift]
\hline
$2\mr A_1$&2&$f_{\foreight{.5}00000001}+f_{\foreight{.5}00000100}$&$2\mr C_1$&regular\\[\dynshift]
\hline
$3\mr A_1$&3&$f_{\foreight{.5}00000001}+f_{\foreight{.5}00000100}+f_{\foreight{.5}00010000}$&$3\mr C_1$&regular\\[\dynshift]
\hline
$4\mr A_1$&3&$f_{\foreight{.5}00000001}+f_{\foreight{.5}00000100}+f_{\foreight{.5}00010000}+f_{\foreight{.5}10000000}$&$4\mr C_1$&regular\\[\dynshift]
\hline
$\mr A_2$&4&$f_{\foreight{.5}00000001}+f_{\foreight{.5}00000010}$&$\mr A_2$&regular\\
$\mr A_2+\mr A_1$&&\hfill${}+f_{\foreight{.5}10000000}$&\hfill${}+\mr C_1$&regular\\
$\mr A_2+2\mr A_1$&&\hfill${}+f_{\foreight{.5}01000000}+f_{\foreight{.5}10000000}$&\hfill${}+2\mr C_1$&regular\\
$\mr A_2+3\mr A_1$&&\hfill${}+f_{\foreight{.5}00001000}+f_{\foreight{.5}01000000}+f_{\foreight{.5}10000000}$&\hfill${}+3\mr C_1$&regular\\[\dynshift]
\hline
$2\mr A_2$&4&$[f_{\foreight{.5}00000100}+f_{\foreight{.5}00001000}] +[f_{\foreight{.5}00100000}+f_{\foreight{.5}10000000}]$&$2\mr A_2$&regular\\[\dynshift]
\hline
$2\mr A_2+\mr A_1$&5&$[f_{\foreight{.5}00000100}+f_{\foreight{.5}00001000}] +[f_{\foreight{.5}00100000}+f_{\foreight{.5}10000000}]$\hfill${}+f_{\foreight{.5}00000001}$&$2\mr A_2$\hfill${}+\mr C_1$&regular\\[\dynshift]
\hline
$2\mr A_2+2\mr A_1$&5&$[f_{\foreight{.5}00000100}+f_{\foreight{.5}00001000}] +[f_{\foreight{.5}00100000}+f_{\foreight{.5}10000000}]$\hfill${}+f_{\foreight{.5}01000000}+f_{\foreight{.5}00000001}$&$2\mr A_2$\hfill${}+2\mr C_1$&regular\\[\dynshift]
\hline
$\mr A_3$&6&$(f_{\foreight{.5}00000001}+f_{\foreight{.5}00000100})+f_{\foreight{.5}00000010}$&$\mr C_2$&folding of $\mr A_3$\\
$\mr A_3+\mr A_1$&&\hfill${}+f_{\foreight{.5}10000000}$&\hfill${}+\mr A_1$&\hfill+regular\\
$\mr A_3+2\mr A_1$&&\hfill${}+f_{\foreight{.5}00010000}+f_{\foreight{.5}10000000}$&\hfill${}+2\mr A_1$&\hfill+regular\\[\dynshift]
\hline
$\mr D_4(a_1)$&6&$f_{\foreight{.5}00001000}+f_{\foreight{.5}00011000}+f_{\foreight{.5}00110000}+f_{\foreight{.5}01000000}$&$\mr D_4(a_1)$&regular\\
$\mr D_4(a_1)+\mr A_1$&&$\hfill+f_{\foreight{.5}00000001}$&\hfill${}+\mr C_1$&regular\\
$\mr A_3+\mr A_2$\hfill$(=\mr D_4(a_1)+2\mr A_1)$&&\hfill${}+f_{\foreight{.5}01122221}+f_{\foreight{.5}00000001}$&\hfill${}+2\mr C_1$&regular\\
$\mr A_3+\mr A_2+\mr A_1$\hfill$(=\mr D_4(a_1)+3\mr A_1)$&&\hfill${}+f_{\foreight{.5}22343221}+f_{\foreight{.5}01122221}+f_{\foreight{.5}00000001}$&\hfill${}+3\mr C_1$&regular\\
$\mr D_4(a_1)+\mr A_2$&&\hfill${}+f_{\foreight{.5}00000010}+f_{\foreight{.5}00000001}$&\hfill${}+\mr A_2$&regular\\[\dynshift]
\hline
$2\mr A_3$&7&$[(f_{\foreight{.5}00000001}+f_{\foreight{.5}00000100})+f_{\foreight{.5}00000010}] +[(f_{\foreight{.5}00010000}+f_{\foreight{.5}10000000})+f_{\foreight{.5}00100000}]$&$2\mr C_2$&2 foldings of $\mr A_3$\\[\dynshift]
\hline
$\mr A_4$&8&$f_{\foreight{.5}00000001}+f_{\foreight{.5}00000010}+f_{\foreight{.5}00000100}+f_{\foreight{.5}00001000}$&$\mr A_4$&regular\\
$\mr A_4+\mr A_1$&&\hfill${}+f_{\foreight{.5}10000000}$&\hfill${}+\mr C_1$&regular\\
$\mr A_4+2\mr A_1$&&\hfill${}+f_{\foreight{.5}01000000}+f_{\foreight{.5}10000000}$&\hfill${}+2\mr C_1$&regular\\
$\mr A_4+\mr A_2$&&\hfill${}+f_{\foreight{.5}00100000}+f_{\foreight{.5}10000000}$&\hfill${}+\mr A_2$&regular\\
$\mr A_4+\mr A_2+\mr A_1$&&\hfill${}+[f_{\foreight{.5}00100000}+f_{\foreight{.5}10000000}]+f_{\foreight{.5}01000000}$&\hfill${}+\mr A_2+\mr C_1$&regular\\[\dynshift]
\hline
$\mr A_4+\mr A_3$&9&$[f_{\foreight{.5}00000001}+f_{\foreight{.5}00000010}+f_{\foreight{.5}00000100}+f_{\foreight{.5}00001000}]
+[(f_{\foreight{.5}12354321}+f_{\foreight{.5}00100000})+f_{\foreight{.5}10000000}]$&$\mr A_4$\hfill${}+\mr C_2$&regular\hfill+folding of $\mr A_3$\\[\dynshift]
\hline
$\mr D_4$&10&$(f_{\foreight{.5}00001000}+f_{\foreight{.5}01000000}+f_{\foreight{.5}00100000})+f_{\foreight{.5}00010000}$&$\mr G_2$&folding of $\mr D_4$\\
$\mr D_4+\mr A_1$&&\hfill${}+f_{\foreight{.5}00000001}$&\hfill${}+\mr C_1$&\hfill+regular\\
$\mr D_5(a_1)$\hfill($=\mr D_4+2\mr A_1$)&&\hfill${}+f_{\foreight{.5}00000011}+f_{\foreight{.5}00000001}$&\hfill${}+2\mr C_1$&\hfill+regular\\
$\mr D_5(a_1)+\mr A_1$\hfill($=\mr D_4+3\mr A_1$)&&\hfill${}+f_{\foreight{.5}01122221}+f_{\foreight{.5}00000011}+f_{\foreight{.5}00000001}$&\hfill${}+3\mr C_1$&\hfill+regular\\
$\mr D_4+\mr A_2$&&\hfill${}+f_{\foreight{.5}00000010}+f_{\foreight{.5}00000001}$&\hfill${}+\mr A_2$&\hfill+regular\\
$\mr D_5(a_1)+\mr A_2$&&\hfill${}+[f_{\foreight{.5}00000010}+f_{\foreight{.5}00000001}] +(f_{\foreight{.5}11110000}+f_{\foreight{.5}10111000})$&\hfill${}+\mr A_2+\mr C_1$&\hfill+regular+IV$_{\mr F_4\subset\mr E_8}$\\[\dynshift]
\hline
$\mr A_5$&10&$(f_{\foreight{.5}00000001}+f_{\foreight{.5}00010000})+(f_{\foreight{.5}00000010}+f_{\foreight{.5}00001000})+f_{\foreight{.5}00000100}$&$\mr C_3$&folding of $\mr A_5$\\
$\mr A_5+\mr A_1$\hfill($=(\mr A_5+\mr A_1)'$)&&\hfill${}+f_{\foreight{.5}10000000}$&\hfill${}+\mr C_1$&\hfill+regular\\[\dynshift]
\hline
$\mr E_6(a_3)$\hfill($=(\mr A_5+\mr A_1)''$)&10&$(f_{\foreight{.5}00000100}+f_{\foreight{.5}10000000})+(f_{\foreight{.5}00001000}+f_{\foreight{.5}00100000}) +f_{\foreight{.5}00010000}+f_{\foreight{.5}11111100}$&$\mr F_4(a_2)$&folding of $\mr E_6$\\
$\mr E_6(a_3)+\mr A_1$\hfill($=\mr A_5+2\mr A_1$)&&\hfill${}+f_{\foreight{.5}00000001}$&\hfill${}+\mr C_1$&\hfill+regular\\[\dynshift]
\hline
$\mr D_6(a_2)$&10&$f_{\foreight{.5}00000010}+f_{\foreight{.5}00000110}+f_{\foreight{.5}00001100}+f_{\foreight{.5}00011000}
+f_{\foreight{.5}00110000}+f_{\foreight{.5}01000000}$&$\mr D_6(a_2)$&regular\\[\dynshift]
\hline
$\mr E_7(a_5)$\hfill($=\mr A_5+\mr A_2$)&10&$f_{\foreight{.5}00000010}+f_{\foreight{.5}00000100}+f_{\foreight{.5}00001000}+f_{\foreight{.5}00010000} +f_{\foreight{.5}01000000}+f_{\foreight{.5}10000000}+f_{\foreight{.5}12343210}$&$\mr E_7(a_5)$&regular\\[\dynshift]
\hline
$\mr E_8(a_7)$\hfill($=2\mr A_4$)&10&$f_{\foreight{.5}12354321}+f_{\foreight{.5}00000001}+f_{\foreight{.5}00000010}+f_{\foreight{.5}00000100}
+f_{\foreight{.5}00001000}+f_{\foreight{.5}00100000}+f_{\foreight{.5}10000000}+f_{\foreight{.5}01000000}$&$\mr E_8(a_7)$&regular\\[\dynshift]
\hline
$\mr A_6$&12&$f_{\foreight{.5}00000001}+f_{\foreight{.5}00000010}+f_{\foreight{.5}00000100}+f_{\foreight{.5}00001000}+
f_{\foreight{.5}00010000}+f_{\foreight{.5}01000000}$&$\mr A_6$&regular\\
$\mr A_6+\mr A_1$&&\hfill${}+f_{\foreight{.5}10000000}$&\hfill${}+\mr C_1$&regular\\[\dynshift]
\hline
$\mr D_5$&14&$f_{\foreight{.5}10000000}+f_{\foreight{.5}00100000}+f_{\foreight{.5}00010000}+(f_{\foreight{.5}00001000}+f_{\foreight{.5}01000000})$&$\mr B_4$&folding of $\mr D_5$\\
$\mr D_5+\mr A_1$&&\hfill${}+f_{\foreight{.5}00000010}$&\hfill${}+\mr C_1$&\hfill+regular\\
$\mr D_6(a_1)$\hfill($=\mr D_5+2\mr A_1$)&&\hfill${}+f_{\foreight{.5}23465432}+f_{\foreight{.5}00000010}$&\hfill${}+2\mr C_1$&\hfill+regular\\
$\mr D_5+\mr A_2$&&\hfill${}+f_{\foreight{.5}00000010}+f_{\foreight{.5}00000001}$&\hfill${}+\mr A_2$&\hfill+regular\\
$\mr D_7(a_2)$\hfill($=\mr D_5+\mr A_3$)&&\hfill${}+f_{\foreight{.5}00000001}+(f_{\foreight{.5}12233210}-f_{\foreight{.5}11233210})$&\hfill${}+\mr B_2$
&\hfill+IV$_{\mr B_4\subset\mr E_8}$\\
$\mr E_7(a_4)$\hfill($=\mr D_6(a_1)+\mr A_1$)&&\hfill${}+f_{\foreight{.5}00000010}+(f_{\foreight{.5}12233210}-f_{\foreight{.5}11233210})$&\hfill${}+\mr C_1+\tilde{\mr C}_1$&\hfill+IV$_{\mr B_4\subset\mr E_8}$
\end{tabular}
}
\end{table}

\begin{center}
Table 7. Normal forms of nilpotent elements in $\mr E_8$ (cont'd.)

\ \def\arraystretch{1.5}

\setlength\dynshift{.25em}

\resizebox{.95\textwidth}{!}{
\begin{tabular}{l|c|l|l|l}
$f$&$d$&representative&normal form&embedding\\
\hline
\hline
$\mr A_7$&15&$(f_{\foreight{.5}00000001}+f_{\foreight{.5}10000000})+(f_{\foreight{.5}00000010}+f_{\foreight{.5}00100000})
+(f_{\foreight{.5}00000100}+f_{\foreight{.5}00010000})+f_{\foreight{.5}00001000}$&$\mr C_4$&folding of $\mr A_7$\\[\dynshift]
\hline
$\mr E_6(a_1)$&16&$f_{\foreight{.5}00000100}+f_{\foreight{.5}00001000}+f_{\foreight{.5}00100000}+f_{\foreight{.5}10000000} +f_{\foreight{.5}01010000}+f_{\foreight{.5}00011000}$&$\mr E_6(a_1)$&regular\\
$\mr E_6(a_1)+\mr A_1$&&\hfill${}+f_{\foreight{.5}00000001}$&\hfill${}+\mr C_1$&regular\\
$\mr E_8(b_6)$\hfill($=\mr E_6(a_1)+\mr A_2=\mr D_8(a_3)$)&&\hfill${}+f_{\foreight{.5}23465431}+f_{\foreight{.5}00000001}$&\hfill${}+\mr A_2$&regular\\[\dynshift]
\hline
$\mr D_6$&18&$f_{\foreight{.5}00000010}+f_{\foreight{.5}00000100}+f_{\foreight{.5}00001000}+f_{\foreight{.5}00010000}
+(f_{\foreight{.5}00100000}+f_{\foreight{.5}01000000})$&$\mr B_5$&folding of $\mr D_6$\\
$\mr E_7(a_3)$\hfill($=\mr D_6+\mr A_1$)&&\hfill${}+f_{\foreight{.5}23465432}$&\hfill${}+\mr C_1$&\hfill+regular\\
$\mr D_7(a_1)$\hfill($=\mr D_6+2\mr A_1$)&&\hfill${}+f_{\foreight{.5}22343210}+f_{\foreight{.5}23465432}$&\hfill${}+2\mr C_1$&\hfill+regular\\[\dynshift]
\hline
$\mr E_8(a_6)$\hfill($=\mr A_8$)&18&$f_{\foreight{.5}00000001}+f_{\foreight{.5}00000010}+f_{\foreight{.5}00000100}+f_{\foreight{.5}00001000}
+f_{\foreight{.5}00010000}+f_{\foreight{.5}00100000}+f_{\foreight{.5}10000000}+f_{\foreight{.5}12243210}$&$\mr E_8(a_6)$&regular\\[\dynshift]
\hline
$\mr E_6$&22&$(f_{\foreight{.5}00000100}+f_{\foreight{.5}10000000})
+(f_{\foreight{.5}00001000}+f_{\foreight{.5}00100000})+f_{\foreight{.5}00010000}+f_{\foreight{.5}01000000}$&$\mr F_4$&folding of $\mr E_6$\\
$\mr E_6+\mr A_1$&&\hfill$+f_{\foreight{.5}00000001}$&\hfill${}+\mr C_1$&\hfill+regular\\
$\mr E_7(a_2)$&&\hfill$+(f_{\foreight{.5}01122210}+f_{\foreight{.5}11122110}+f_{\foreight{.5}11221110})$&\hfill${}+\tilde{\mr C}_1$&\hfill+IV$_{\mr F_4\subset\mr E_8}$\\
$\mr E_8(b_5)$\hfill($=\mr E_7(a_2)+\mr A_1$)&&\hfill$+f_{\foreight{.5}23465431}+f_{\foreight{.5}00000001}$&\hfill${}+\mr A_2$&\hfill+IV$_{\mr F_4\subset\mr E_8}$\\[\dynshift]
\hline
$\mr D_7$&22&$f_{\foreight{.5}00000001}+f_{\foreight{.5}00000010}+f_{\foreight{.5}00000100}+f_{\foreight{.5}00001000}
+f_{\foreight{.5}00010000}+(f_{\foreight{.5}00100000}+f_{\foreight{.5}01000000})$&$\mr B_6$&folding of $\mr D_7$\\[\dynshift]
\hline
$\mr E_8(a_5)$\hfill($=\mr D_8(a_1)$)&22&$f_{\foreight{.5}22343210}+f_{\foreight{.5}00000001}+f_{\foreight{.5}00000010}+f_{\foreight{.5}00000100}+f_{\foreight{.5}00001000}
+f_{\foreight{.5}00011000}+f_{\foreight{.5}00110000}+f_{\foreight{.5}01000000}$&$\mr E_8(a_5)$&regular\\[\dynshift]
\hline
$\mr E_7(a_1)$&26&$f_{\foreight{.5}00000010}+f_{\foreight{.5}00000100}+f_{\foreight{.5}00001000}+f_{\foreight{.5}00100000} +f_{\foreight{.5}10000000}+f_{\foreight{.5}00110000}+f_{\foreight{.5}01010000}$&$\mr E_7(a_1)$&regular\\
$\mr E_8(b_4)$\hfill($=\mr E_7(a_1)+\mr A_1$)&&\hfill${}+f_{\foreight{.5}23465432}$&\hfill${}+\mr C_1$&regular\\[\dynshift]
\hline
$\mr E_8(a_4)$\hfill($=\mr D_8$)&28&$f_{\foreight{.5}22343210}+f_{\foreight{.5}00000001}+f_{\foreight{.5}00000010}+f_{\foreight{.5}00000100}+f_{\foreight{.5}00001000}
+f_{\foreight{.5}00010000}+f_{\foreight{.5}00100000}+f_{\foreight{.5}01000000}$&$\mr E_8(a_4)$&regular\\[\dynshift]
\hline
$\mr E_7$&34&$f_{\foreight{.5}00000010}+f_{\foreight{.5}00000100}+f_{\foreight{.5}00001000}
+f_{\foreight{.5}00010000}+f_{\foreight{.5}00100000}+f_{\foreight{.5}01000000}+f_{\foreight{.5}10000000}$&$\mr E_7$&regular\\
$\mr E_8(a_3)$\hfill($=\mr E_7+\mr A_1$)&&\hfill${}+f_{\foreight{.5}23465432}$&\hfill${}+\mr C_1$&regular\\[\dynshift]
\hline
$\mr E_8(a_2)$&38&$f_{\foreight{.5}00000001}+f_{\foreight{.5}00000010}+f_{\foreight{.5}00001100}+f_{\foreight{.5}00011000} +f_{\foreight{.5}00110000}+f_{\foreight{.5}01010000}+f_{\foreight{.5}00100000}+f_{\foreight{.5}10000000}$&$\mr E_8(a_2)$&regular\\[\dynshift]
\hline
$\mr E_8(a_1)$&46&$f_{\foreight{.5}00000001}+f_{\foreight{.5}00000010}+f_{\foreight{.5}00000100}+f_{\foreight{.5}00001000}
+f_{\foreight{.5}00100000}+f_{\foreight{.5}00110000}+f_{\foreight{.5}01010000}+f_{\foreight{.5}10000000}$&$\mr E_8(a_1)$&regular\\[\dynshift]
\hline
$\mr E_8$&58&$f\pr$&$\mr E_8$&regular
\end{tabular}
}
\end{center}

\section{A map from nilpotent orbits to conjugacy classes in the Weyl group}

Recall the following construction (\cite{EKV}, cf. \cite{K} and \cite{S}).
Let $\mf g$ be a simple Lie algebra and let $W$ be its Weyl group.
Let $f$ be a nilpotent element of $\mf g$ of regular semisimple type.
This means that there exists $E\in\mf g_d$ (in the $\bb Z$-grading \eqref{eq:grading}),
such that $f+E$ is a regular semisimple element of $\mf g$.
Its centralizer is a Cartan subalgebra $\mf h'$ of $\mf g$.

Let $s_i=\alpha_i(h)$, $i=1,...,r$, be the Dynkin labels for $f$, and let $s_0=2$. Let
\[
m=\sum_{i=0}^ra_is_i,
\]
where $\sum_{i=1}^ra_i\alpha_i$ is the highest root, and let $\eps=e^{\frac{2\pi i}m}$.
Define an inner automorphism $\sigma_f$ of $\mf g$ by letting
\begin{equation}\label{eq:sigma}
\sigma_f(e_{\alpha_i})=\eps^{s_i}e_{\alpha_i},\quad\sigma_f(e_{-\alpha_i})=\eps^{-s_i}e_{-\alpha_i},\qquad i=1,...,r.
\end{equation}
The order of $\sigma_f$ is $m$ if $f$ is not even and $m/2$ if $f$ is even.

It was pointed out in \cite{EKV} that all $f$ of regular semisimple type are even,
with the exception of the nilpotent elements in $\mf{sl}_n$, associated to partitions $(k^{(m)},1)$,
where $k$ is even (hence $n=mk+1$ is odd); but these $f$ are even in $\mf{sl}_{n-1}$, hence we may assume that $f$ is even.
Then all the $s_i$ are either $0$ or $2$, and we replace the $2$'s by the $1$'s.
Thus, $\sigma_f$ can be depicted using an extended Dynkin diagram of $f$ by dividing the labels by 2 and letting $s_0=1$.

Since $f+E$ is an eigenvector of $\sigma_f$ with eigenvalue $\eps^{-1}$,
the Cartan subalgebra $\mf h'$ is $\sigma_f$-invariant, hence $\sigma_f$ induces an element $w_f$ of the Weyl group $W$.

Since all irreducible nilpotent elements are of regular semisimple type and even, we thus obtain a map
from the set of orbits of irreducible nilpotent elements in $\mf g$ to the set of conjugacy classes in $W$.
This map is depicted in Table \ref{tab:w} below.

Taking the normal form of an arbitrary nilpotent element
\[
f=\sum_jf[j],\quad f[j]\in\mf g[j],
\]
we can extend the map, given by Table \ref{tab:w}, to all $f$ by letting
\[
w_f=\prod_jw_{f[j]},
\]
where $w_{f[j]}$ is the image in $W$ of the element of the Weyl group of $\mf g[j]$ under the embedding $\mf g[j]\subset\mf g$.

Note, however, that, though the map $f\mapsto w_f$ coincides with that of Kazhdan-Lusztig \cite{KL}, \cite{Spa}
on the set of regular semisimple type $f$, in general it is different. We will study this question in a subsequent publication.

\begin{table}[H]\caption{Diagrams of $w\in W$, corresponding to irreducible nilpotent elements in simple $\mf g$}\label{tab:w}

\

\begin{tabular}{c|c|c|c}
$f$&diagram of $w_f$&order of $w_f$&characteristic polynomial of $w_f$\\
\hline
$\mr A_{2k}$&\oneADynkin{1,1,1,{\,\cdots},1,1}&$2k+1$&$x^{2k}+x^{2k-1}+...+1$\\
$\mr C_k$&\oneCDynkin{1,1,1,{\,\cdots},1}1&$2k$&$x^k+1$\\
$\mr B_k$&\oneBDynkin{1,1,1,1,{\,\cdots},1}1&$2k$&$x^k+1$\\
$\mr D_{2k+2}(a_k)$&\oneDDynkin{1,1,0,1,0,{\,\cdots},1,0}11&$2k+2$&$(x^{k+1}+1)^2$\\
$\mr G_2$&\oneGDynkin{1,1,1}&6&$\phi_6$\\
$\mr F_4$&\oneFDynkin{1,1,1,1,1}&12&$\phi_{12}$\\
$\mr F_4(a_2)$&\oneFDynkin{1,0,1,0,1}&6&$\phi_6^2$\\
$\mr E_6(a_1)$&\onEsixDynkin{1,1,0,1,1,1,1}&9&$\phi_9$\\
$\mr E_7$&\onEsevenDynkin{1,1,1,1,1,1,1,1}&18&$\phi_{18}\phi_2$\\
$\mr E_7(a_1)$&\onEsevenDynkin{1,1,1,0,1,1,1,1}&14&$\phi_{14}\phi_2$\\
$\mr E_7(a_5)$&\onEsevenDynkin{1,0,0,1,0,0,1,0}&6&$\phi_6^3\phi_2$\\
$\mr E_8$&\onEightDynkin{1,1,1,1,1,1,1,1,1}&30&$\phi_{30}$\\
$\mr E_8(a_1)$&\onEightDynkin{1,1,1,1,1,0,1,1,1}&24&$\phi_{24}$\\
$\mr E_8(a_2)$&\onEightDynkin{1,1,1,0,1,0,1,1,1}&20&$\phi_{20}$\\
$\mr E_8(a_4)$&\onEightDynkin{1,1,0,1,0,1,0,1,0}&15&$\phi_{15}$\\
$\mr E_8(a_5)$&\onEightDynkin{1,0,1,0,0,1,0,1,0}&12&$\phi_{12}^2$\\
$\mr E_8(a_6)$&\onEightDynkin{1,0,1,0,0,1,0,0,0}&10&$\phi_{10}^2$\\
$\mr E_8(a_7)$&\onEightDynkin{1,0,0,0,1,0,0,0,0}&6&$\phi_6^4$
\end{tabular}
\end{table}

\begin{bibdiv}
\begin{biblist}*{labels={shortalphabetic}}
\bib{CM}{book}{
  label={CM},
  title={Nilpotent orbits in semisimple Lie algebras},
  author={D. H. Collingwood},
  author={W. M. McGovern},
  year={1993},
  publisher={Van Nostrand Reinhold, New York},
  pages={xi+186},
  doi={10.1201/9780203745809}
}
\bib{DK}{article}{
label={DK},
title={Polar representations},
author={J. Dadok},
author={V. G. Kac},
journal={J. Algebra},
year={1985},
volume={92},
number={2},
pages={504--524},
doi={10.1016/0021-8693(85)90136-X}
}
\bib{DSJKV}{article}{
label={DSJKV},
title={Integrable triples in semisimple Lie algebras},
author={A. De Sole},
author={M. Jibladze},
author={V. G. Kac},
author={D. Valeri},
journal={Lett. Math. Phys.},
year={2021},
eprint={arXiv:2012.12913},
status={submitted}
}
\bib{D}{collection.article}{
    label={D},
	author = {E. B. Dynkin},
	title = {Semisimple subalgebras of semisimple Lie algebras},
    booktitle={Five papers on algebra and group theory},
	doi = {10.1090/trans2/006/02},
	date ={1957},
	publisher = {American Mathematical Society},
	pages = {111--244},
    note = {[Translated from Russian, Mat. Sb. {\bf72} (1952), no. 2, 349--462]}
}
\bib{EJK}{article}{
label={EJK},
title={Semisimple cyclic elements in semisimple Lie algebras},
author={A. G. Elashvili},
author={M. Jibladze},
author={V. G. Kac},
journal={Transf. Groups},
year={2020},
doi={10.1007/s00031-020-09568-2},
eprint={arXiv:1907.09170}
}
\bib{EKV}{article}{
label={EKV},
title={Cyclic elements in semisimple Lie algebras},
author={A. G. Elashvili},
author={V. G. Kac},
author={E. B. Vinberg},
journal={Transf. Groups},
volume={18},
year={2013},
pages={97--130},
doi={10.1007/s00031-013-9214-0}
}
\bib{KL}{article}{
  label={KL},
  title={Fixed point varieties on affine flag manifolds},
  author={D. Kazhdan},
  author={G. Lusztig},
  journal={Israel Journal of Mathematics},
  volume={62},
  number={2},
  pages={129--168},
  year={1988},
  doi={10.1007/BF02787119}
}
\bib{K}{article}{
  title={The principal three-dimensional subgroup and the Betti numbers of a complex simple Lie group},
  label={K},
  author={Kostant, B.},
  journal={American J. Math.},
  volume={81},
  number={4},
  pages={973--1032},
  year={1959},
  doi={10.2307/2372999}
}
\bib{P}{article}{
  label={P},
  title={On spherical nilpotent orbits and beyond},
  author={Panyushev, D.},
  journal={Ann. Inst. Fourier},
  year={1999},
  volume={49},
  pages={1453--1476},
  doi = {10.5802/aif.1726}
}
\bib{SLA}{misc}{
    label={SLA},
    author={de Graaf, W. A.},
    author={the GAP Team},
    title={{SLA}, Computing with simple Lie algebras, {V}ersion 1.5.3},
    year={2019},
    note={Refereed GAP package, \href {https://gap-packages.github.io/sla/} {\texttt{https://gap-packages.github.io/}\discretionary {}{}{}\texttt{sla/}}}
}
\bib{Spa}{article}{
  label={Spa},
  title={On the Kazhdan-Lusztig map for exceptional Lie algebras},
  author={Spaltenstein, N.},
  journal={Advances in Mathematics},
  volume={83},
  number={1},
  pages={48--74},
  year={1990},
  doi = {10.1016/0001-8708(90)90068-x}
}
\bib{S}{article}{
  label={S},
  title={Regular elements of finite reflection groups},
  author={Springer, T. A.},
  journal={Invent. Math.},
  volume={25},
  number={2},
  pages={159--198},
  year={1974},
  doi = {10.1007/bf01390173}
}
\end{biblist}
\end{bibdiv}

\end{document}